\newtheorem{thm}{Theorem}[section]
\newtheorem{prop}[thm]{Proposition}
\newtheorem{conclusion}[thm]{Conclusion}
\newtheorem{defi}[thm]{Definition}
\newtheorem{rem}[thm]{Remark}
\newtheorem{ass}{Assumption}
\def\R{\mathbb R}
\def\N{\mathbb N}
\def\E{\mathbb E}
\def\P{\mathbb P}
\def\Q{\mathbb Q}
\def\shc{{\cal C}}
\def\she{{\cal E}}
\def\shf{{\cal F}}
\def\shg{{\cal G}}
\def\shh{{\cal H}}
\def\shm{{\cal M}}
\def\halb{{\frac{1}{2}}}
\author{
{\sc Lucas IZYDORCZYK}
\thanks{ENSTA-Paris, Institut Polytechnique de Paris.
Unit\'e de Math\'ematiques Appliqu\'ees (UMA).
 E-mail:{ \tt lucas.izydorczyk@ensta-paris.fr}} 
{\sc,}\ {\sc Nadia OUDJANE}
\thanks{EDF R\&D,   and FiME (Laboratoire de Finance des March\'es de l'Energie
(Dauphine, CREST,  EDF R\&D) www.fime-lab.org). 
E-mail:{\tt  
nadia.oudjane@edf.fr}}
\ {\sc and}\ {\sc Francesco RUSSO} 
\thanks{ENSTA Paris, Institut Polytechnique de Paris.
Unit\'e de Math\'ematiques Appliqu\'ees (UMA). 
 E-mail:{\tt  francesco.russo@ensta-paris.fr}.
 }}
\date{December 2019}
\title{McKean Feynman-Kac probabilistic representations of non-linear partial differential equations}
\newcommand{\MBFigure}[6]{
$\left. \right.$ \\
\refstepcounter{figure}
\addcontentsline{lof}{figure}{\numberline{\thefigure}{\ignorespaces #5}}
\begin{center}
\begin{minipage}{#1cm}
\centerline{\includegraphics[width=#2cm,angle=#3]{#4}}
\begin{center}
\upshape{F\textsc{ig} \normal
\end{center}
size{\thefigure}. $-$} #5
\end{center}
\label{#6}
\end{minipage}
\end{center}
$\left. \right.$ \\}
\begin{document}
\maketitle
 \begin{abstract} This paper presents a partial state of the art about the topic
of representation of generalized Fokker-Planck Partial Differential Equations (PDEs) by solutions of McKean Feynman-Kac Equations (MFKEs) that generalize the notion of McKean Stochastic Differential Equations (MSDEs).  
While MSDEs can be related to non-linear Fokker-Planck PDEs, MFKEs can be related to non-conservative non-linear PDEs. Motivations come
from modeling issues but also from numerical approximation issues
in computing the solution of a PDE, arising for instance in the context of stochastic control.
MFKEs also appear naturally in representing final value problems related to backward Fokker-Planck equations. 

 \end{abstract}
\medskip\noindent {\bf Key words and phrases:}  
backward diffusion; McKean stochastic differential equation;  probabilistic representation of
 PDEs; time reversed diffusion; HJB equation; Feynman-Kac measures.

\medskip\noindent  {\bf 2010  AMS-classification}: 60H10; 60H30; 60J60; 65C05; 65C35;
35K58.
 
 
\section{Introduction and motivations}

\subsection{General considerations}

The idea of the present article is to focus on models which 
have a double macroscopic-microscopic face in the form
of {\it perturbation} of a so called Fokker-Planck type equation
that we call {\it generalized} Fokker-Planck equation. Our ambition is driven by two main reasons.
\begin{enumerate}
\item A {\it modeling} reason: the idea is to observe both from
a macroscopic-microscopic point of view phenomena arising from  physics, biology,
chemistry or complex systems.
\item A {\it numerical simulation} reason: to provide Monte-Carlo 
suitable algorithms to approach PDEs.
\end{enumerate}
The {\it target macroscopic} Fokker-Planck equation is 
\begin{equation}
\label{epdeIntro0}
\left \{
\begin{array}{lll}
\partial_t u &=& \frac{1}{2} \displaystyle{\sum_{i,j=1}^d} \partial_{ij}^2 \left( (\sigma \sigma^\top)_{i,j}(t,x,u) u \right) - div \left( b(t,x,u, \nabla u) u \right)\\
 &&+ \Lambda(t,x,u,\nabla u) u\ , \quad \textrm{for}\  t\in ]0,T]\ ,\\
u(0, \cdot) &=& {\bf u_0},
\end{array}
\right .
\end{equation}
where ${\bf u_0}$ is a Borel probability measure
 $\sigma: [0,T] \times \R^d \times \R \rightarrow  M_{d,p}(\R)$,
$b: [0,T] \times \R^d \times \R \rightarrow  \R^d$,
$\Lambda: [0,T] \times \R^d \times \R \times \R^d \rightarrow \R$ and $\nabla$  denotes the gradient operator. 
The initial condition in \eqref{epdeIntro0} 
means that for every continuous bounded real function $\varphi$
we have
$\int  \varphi(x)u(t,x) dx \rightarrow \int \varphi(x) {\bf u_0}(dx)$
when $t \rightarrow 0$.
When ${\bf u_0}$ admits a density, we denote it by $u_0$.
The unknown function $u:]0,T] \times \R^d \rightarrow \R$
is supposed to run in  $L^1(\R^d)$ considered as a subset
of the space of finite Radon measures $\shm(\R^d)$.
The idea consists in finding a probabilistic representation via the solution of a {\it Stochastic Differential Equation (SDE)}
whose coefficients do not depend only on time and the position
of the {\it particle} but also on its probability law. 
The {\it target  microscopic equation}  we have in mind is
\begin{equation}
\label{eq:MckeanExtIntro}
\left\{
\begin{array}{l}
Y_t=Y_0+\int_0^t \sigma\Big (s,Y_s,u(s,Y_s)\Big )dW_s+\int_0^t b\Big (s,Y_s,u(s,Y_s)\Big )ds\\ 
Y_0\,\sim\, {\bf u_0}\\ 
{\displaystyle \int \varphi(x)u(t,x)dx=\E\left [\,\varphi(Y_t)\,\,\exp\Big \{\int_0^t\Lambda\big (s,Y_s,{u}(s,Y_s),\nabla{u}(s,Y_s)\big )ds\Big \}\,\right]}\ ,\quad \textrm{for}\ t\in ]0,T]\ ,
\end{array}
\right . 
\end{equation}
for any continuous bounded real valued test function $\varphi$. 
Sometimes we denominate the third line equation of
\eqref{eq:MckeanExtIntro} the {\it linking equation}. 
When $\Lambda=0$, in equation~\eqref{eq:MckeanExtIntro},  the linking
equation simply says that $u(t,\cdot)$ 
coincides with the density of the marginal distribution $\mathcal{L}(Y_t)$. 
In this specific case, equation~\eqref{eq:MckeanExtIntro} reduces to a  {\it McKean Stochastic Differential Equation (MSDE)}, which is in general an SDE whose coefficients, at time $t$,  depend, not only on $(t,Y_t),$  but also on the marginal law $\mathcal{L}(Y_t)$. With more general functions $\Lambda$,
the role of the linking equation is more intricate since the whole history of the process $(Y_s)_{0\leq s\leq t}$ is involved. This fairly general type of equations will be called {\it McKean Feynman-Kac Equation (MFKE)} to emphasize the fact that $u(t,x)dx$ now corresponds to a
 non-conservative Feynman-Kac measure. 

An interesting feature of MSDEs (so when $\Lambda=0$) is that the law of the  process $Y$ can often be characterized as the limiting empirical distribution of a large number  of interacting particles, whose dynamics are described by a coupled system of classical SDEs. When the number of particles grows to infinity, the particles behave closely to a system of independent copies of
$Y$. This constitutes the so called {\it propagation of chaos} phenomenon, already observed in the literature when the drift and diffusion coefficients
 are Lipschitz dependent on the solution marginal law,
with respect to the Wasserstein metric, see e.g. \cite{kac, Mckeana,  Mckean, sznitman, MeleaRoel}.  Propagation of chaos is a common phenomenon arising in many
 physical contexts, see for instance \cite{ugolini} concerning
Nelson stochastic mechanics.

When $\Lambda=0$,  equation~\eqref{epdeIntro0} is a {\it non-linear} Fokker-Planck equation, it is conservative and it is known that, under mild assumptions, it describes the dynamics of the marginal probability densities, $u(t,\cdot)$, of the process $Y$. This correspondence between PDE~\eqref{epdeIntro0} with MSDE~\eqref{eq:MckeanExtIntro} and interacting particles 
have extensive interesting applications. 
In physics, biology
or economics, it is a way to relate a microscopic model involving interacting particles to a macroscopic model involving the dynamics of the underlying density.
 Numerically, this correspondence motivates Monte-Carlo approximation schemes for PDEs. 
In particular, \cite{bossytalay1} has  contributed to develop stochastic
 particle methods in the
 spirit of McKean to provide original numerical schemes approaching a PDE
 related to 
Burgers equation providing also the rate of convergence.

Below we list some situations of particular interest where such correspondence holds. 

\subsection{Some motivating examples}

{\bf Burgers equation}

\medskip

We fix 
 $d = p = 1$ and let $\nu>0$ and $u_0$ be a probability density on $\R$.
  We consider two equivalent specific cases of~\eqref{epdeIntro0}. 
 The first
 $\sigma \equiv \nu,  \ b \equiv 0, \ \Lambda(t,x,u,z) =  z$. 
The second $\sigma  \equiv \nu, \ b(t,x,u) = \frac{u}{2},    \Lambda = 0$. 
Both instantiations correspond to the the {\it viscid Burgers equation} in dimension $d = 1$, given by
\begin{equation}
\label{eq:Burgers}
\left \{ 
\begin{array}{l}
\partial_t u = \frac{\nu^2}{2} \partial_{xx} u - u \partial_x u, \quad (t,x) \in [0,T] \times \R,  \\
u(0,\cdot) = {\bf u_0} \ .
\end{array}
\right .
\end{equation}

\noindent {\bf Generalized Burgers-Huxley equation}

\medskip

We fix 
 $d = p = 1$ and let $\nu>0$ and $u_0$ be a probability density on $\R$. We consider the particular cases
 of~\eqref{epdeIntro0} where 
 $\sigma \equiv \nu,  \ b(t,x,u)=\alpha\frac{u^n}{n+1}, \ \Lambda(t,x,u) =  \beta (1- u^n)(u^n - \gamma)$, with fixed  reals $\alpha,\beta,\gamma$ and a non-negative integer $n$. 
%
This instantiation corresponds to a natural extension of Burgers equation called
{\it Generalized Burgers-Huxley equation} or {\it Burgers-Fisher equation} which is of great importance to represent non-linear phenomena in various fields such as biology~\cite{aronsonb,murray}, physiology~\cite{keener} and physics~\cite{wang}. 
 These equations have the particular interest to describe the interaction between the reaction mechanisms, convection effect, and diffusion transport.
Those are non-linear and non-conservative PDEs of the form 
\begin{equation}
\label{eq:BurgersHuxley}
\left\{
\begin{array}{l}
\partial_t u =  \frac{\nu}{2}\partial_{xx} u - \alpha u^n \partial_x u
+ \beta u (1- u^n)(u^n - \gamma),
\quad (t,x) \in [0,T] \times \R,  \\
u(0,\cdot) = {\bf u_0.} 
\end{array}
\right .
\end{equation}


\noindent {\bf Fokker-Planck equation with terminal condition}

\medskip
\label{EDPTermCond}
The present example does not properly integrate the framework of~\eqref{epdeIntro0}. In terms of application, we are interested  by 
inverse problems that can be formulated by a PDE with terminal condition
\begin{equation} \label{EDPTerm}
\left \{
\begin{array}{lll}
\partial_t u &=& \frac{1}{2} 
\displaystyle{\sum_{i,j=1}^d} \partial_{ij}^2 \left( (\sigma \sigma^t)_{i,j}(t,x) u \right) - div \left( b(t,x) u \right)\\
 &&+ \Lambda(t,x) u\ , \quad \textrm{for}\  t\in ]0,T[\ ,\\
u(T, \cdot) &=& {\bf u_T,}
\end{array}
\right .
\end{equation}
where ${\bf u_T}$ is a prescribed probability measure.
Solving that equation by analytical means constitutes a delicate task. A probabilistic representation may 
help for studying well-posedness or providing 
numerical schemes.



Backward simulation of diffusions is a subject of active research in various domains of physical sciences and engineering, as heat 
conduction~\cite{beck1985inverse},
 material science~\cite{ renardy1987mathematical} or hydrology~\cite{bagtzoglou2003marching}.
 In particular, \textit{hydraulic inversion} is interested in inverting a diffusion phenomenon representing the concentration of a pollutant to identify the pollution source location when the final concentration profile is observed.   
The problem is in general ill-posed because either the solution is not unique 
or the solution is not stable. For this type of problem, the existence is ensured by the fact that the  observed contaminant has necessarily originated from some place at a given time (as soon as the model is correct). 
To correct the lack of well-posedness 
two regularization procedures have been proposed in the literature:
the first one relies on the notion of quasi-solution, introduced by Tikhonov
\cite{tikhonov1977solutions},
the second one on the method of quasi-reversibility, 
introduced by Lattes and Lions, \cite{lattes1969method}.
Besides well-posedness, a second crucial issue consists in providing a numerical
 approximating scheme to the  backward diffusion equation. 
A probabilistic representation of \eqref{EDPTerm} via
the time-reversal of a diffusion could show those issues under a new light.

\medskip
\noindent{\bf The stochastic Fokker-Planck with multiplicative noise}

\label{SSOC}

We fix $p = d,  \quad \sigma(t,x,u) = \Phi(u) Id_d\,$, where $  \Phi: \R \rightarrow \R$ and $b= \Lambda \equiv 0.$
Typical examples are the case of classical
 porous media type equation (resp. fast diffusion equation), 
when $ \Phi(u) = u ^q, 1 \le q$ (resp. $ 0 < q < 1$). 
The (singular) case 
 $\Phi(u) =  \gamma H(u-e_c)$, $H$ being the Heavisise function and $e_c$ a given threshold in $\R,$
appears in the science of complex systems, more precisely in the so called
{\it self-organized criticality}, see e.g. \cite{Bak, Caf_Loreto, BTW}. 

\begin{equation}
\label{E1.1}
\left \{
\begin{array}{lll}
\partial_t u&=& \frac{\gamma}{2} \Delta (H(u-e_c)u )
\\ 
u(0,\cdot)& =& {\bf u_0}.
\end{array}
\right.
\end{equation}
%
The phenomenon of {\it self-organized criticality}
often is described in  two scale phases: a fast dynamics (of {\it avalanch type})
 described by the PDE
\eqref{E1.1} and a slower motion of {\it sand storming}
 modeled by the addition of a supplementary stochastic noise
$\Lambda(t,x; \omega)$.
In that case the {\it target macroscopic} equation is
\begin{equation}
\label{E1.1bis}
\left \{
\begin{array}{lll}
\partial_t u&=& \frac{\gamma}{2} \Delta (H(u-e_c)u ) + \Lambda(t,x;\omega) u
\\ 
u(0,\cdot)& =& {\bf u_0},
\end{array}
\right.
\end{equation}
where $\Lambda(t,x;\omega)$ is a quenched realization of a space-time coloured (ideally
white) noise. 
%
The SPDE will be represented by a MSDE 
 in random environment, see Section \ref{random}. 


\subsection{Structure of the paper}

In the rest of the paper, to simplify notations, most of the results are stated in the one-dimensional setting. The generalization to the multi-dimensional case is straightforward. 

The paper is organized as follows. Next section presents a brief review of basic situations where Fokker-Planck equations can be represented by  MSDEs which in turn can be represented by interacting particles systems. Section~\ref{sec:NonCons}, considers the case of generalized Fokker-Planck equations in the sense of~\eqref{epdeIntro0} with a non-zero term $\Lambda$ allowing to take into account  non-conservative PDEs including a large class  of semi-linear PDEs. Section~\ref{sec:jumps} highlights the correspondence between MFKEs and MSDEs with jumps which paves the way to a great variety of numerical approximations schemes for non-linear PDEs. Section~\ref{TimeRev} is devoted to a particular inverse problem which consists in modeling backwardly in time the evolution of a Fokker-Planck equation with a given terminal condition. This problem can be related to a time-reversed SDE which in turn can be represented by a MSDE. 
In Section~\ref{random} we analyze the well-posedness of generalized Fokker-Planck equation where the term $\Lambda$ in~\eqref{epdeIntro0} may involve an exogenous noise resulting in a Stochastic non-linear PDE. Finally, in Section~\ref{sec:control}, we consider a stochastic control problem for which the associated  Hamilton-Jacobi-Bellman equation can be represented by a MFKE. 




 \section{McKean representations of non linear Fokker-Planck equations}
 \label{S4}

\setcounter{equation}{0}

In this section, we recall some standard situations where a Fokker-Planck PDE can be represented by an SDE which in turn can be approached by an interacting particles system.

\subsection{Probabilistic representation of linear Fokker-Planck equations}
Suppose there exists a  solution  $(Y_t)_{t\in [0,T]}$ (in law)  to the SDE
\begin{equation}
\label{eq:Diff}
\left\{
\begin{array}{l}
{\displaystyle Y_t=Y_0+\int_0^t \sigma(s,Y_s)dW_s+\int_0^t b(s,Y_s)ds}, t \in [0,T],\\ 
Y_0\,\sim\,{\bf u_0}\ ,
\end{array}
\right . 
\end{equation}
where $W$ is a real valued Brownian motion on $[0,T]$ and $\bf u_0$ is a probability measure on $\R$. 
A direct application of It\^o formula shows that the marginal probability laws   $(\mu(t,\cdot):=\mathcal{L}(Y_t))_{t\in [0,T]}$ generate
a distributional solution of the  linear Fokker-Planck PDE
\begin{equation}
\label{eq:PDEFK}
\left\{
\begin{array}{l}
{\displaystyle \partial_t \mu =\frac{1}{2}\partial_{xx}^2(\sigma^2(t,x)\mu)-\partial_x(b(t,x)\mu)}\\ 
\mu(0,dx)={\bf u_0}(dx).
\end{array}
\right .
\end{equation}
This naturally suggests a Monte Carlo algorithm to approximate the above linear PDE, consisting in simulating $N$ i.i.d. particles $(\xi^{i})_{i=1,\cdots N}$  with $N$ i.i.d. Brownian motions $ (W^i)_{i=1,\cdots N}$ i.e. 
\begin{equation}
\label{eq:MC}
\left\{
\begin{array}{l}
{\displaystyle \xi^{i}_t=\xi^{i}_0+\int_0^t \sigma(s,\xi^{i}_s)dW^i_s+\int_0^t b(s,\xi^{i}_s)ds}\\
\xi^i_0\quad \textrm{i.i.d.}\,\sim\,{\bf u_0}\\
{\displaystyle \mu^N_t=\frac{1}{N}{\displaystyle \sum_{j=1}^N \delta_{\xi^{j}_t}}}.
\end{array}
\right . 
\end{equation}
Then the law of large numbers provides the convergence of the empirical approximation 
$\mu^N_t\xrightarrow[N\rightarrow \infty] {}\mu(t,\cdot)$,  the solution of the Fokker-Planck equation~\eqref{eq:PDEFK}.

\subsection{McKean probabilistic representation of non-linear Fokker-Planck equation}

\label{S2.2}

We consider the non-linear SDE in the sense of
 McKean (MSDE) 
\begin{equation}
\label{eq:MckeanSDE}
\left\{
\begin{array}{l}
{\displaystyle Y_t=Y_0+\int_0^t \sigma\Big (s,Y_s,(K\ast \mu)(s,Y_s)\Big )dW_s+\int_0^t b\Big (s,Y_s,(K\ast \mu)(s,Y_s)\Big )ds}\\ 
Y_0\,\sim\,{\bf u_0}\\ 
\mu(t,\cdot)\ \textrm{is the probability law of}\ Y_t\ ,t \in [0,T],
\end{array}
\right . 
\end{equation}
 whose solution is a couple $(Y,\mu)$.  
Here $\sigma, b$ are Lipschitz,
$K \,:\,\R\times\R\rightarrow\R$ denotes a Lipschitz continuous convolution kernel such that $(K\ast\mu)(t,y):=\int K(y,z)\mu(t,dz)$ for any $y\in\R$. We emphasize that this type of regularized dependence of the drift and diffusion coefficients on $\mu$ is essentially different (and in general easier to handle) from
a pointwise dependence where the coefficients $b$ or $\sigma$ may depend on the value of the marginal density at the current particle position $\frac{d\mu}{dx}(s,Y_s)$. 
This regularized or non-local dependence on the time-marginals  $\mu (t,\cdot)$ is a particular
case of the framework when the diffusion and drift coefficients are Lipschitz  with respect to $\mu (t,\cdot)$
according to the the Wasserstein metric.

Again, by It\^o formula,
 given a solution $(Y,\mu)$ of~\eqref{eq:MckeanSDE}, $\mu$ 
 solves the non-local non-linear PDE
\begin{equation}
\label{eq:PDEMckean}
\left\{
\begin{array}{l}
{\displaystyle \partial_t \mu =\frac{1}{2}\partial_{xx}^2\Big (\sigma^2 (t,x,K\ast \mu)\mu\Big )-\partial_x\Big (b(t,x,K\ast\mu)\mu\Big )}\\
\mu(0,dx)={\bf u_0}(dx),
\end{array}
\right .
\end{equation}
in the sense of distributions.
In this setting, the well-posedness of~\eqref{eq:MckeanSDE}
 relies on a fixed point argument in the 
space of trajectories under the Wasserstein metric, see e.g. 
\cite{sznitman}, at least in the case when the diffusion
term does not depend on the law.  
We will denominate this situation as the {\it traditional} setting.

%
Deriving a Monte-Carlo approximation scheme from this probabilistic representation already becomes more tricky since it can no more rely on independent particles but should involve an interacting  particles system as initially proposed in~\cite{kac,sznitman}.  Consider $N$ interacting particles $(\xi^{i,N})_{i=1,\cdots N}$  with $N$ i.i.d. Brownian motions $ (W^i),$ i.e. 
\begin{equation}
\label{eq:MckeanPart1}
\hspace{-1cm}
\left\{
\begin{array}{l}
{\displaystyle \xi^{i,N}_t=\xi^{i,N}_0+\int_0^t \sigma\Big (s,\xi^{i,N}_s, (K\ast \mu^N_s)(\xi^{i,N}_s)\Big )dW^i_s+\int_0^t b\Big (s,\xi^{i,N}_s,(K\ast \mu^N_s)(\xi^{i,N}_s)\Big )ds}\\
\xi^{i,N}_{0}\ \textrm{i.i.d.}\ \sim\, {\bf u_0}\\
{\displaystyle \mu^N_t=\frac{1}{N}{\displaystyle \sum_{j=1}^N \delta_{\xi^{j,N}_t}}},
\end{array}
\right . 
\end{equation}
with $(K\ast\mu^N_t)(y)=\frac{1}{N}\sum_{j=1}^N K(y,\xi^{j,N}_t)\ .
$
The above system defines a so-called \textit{weakly interacting} particles system, as  pointed out in~\cite{Oelschlager84}. This terminology underlines the fact that any particle interacts with the rest of the population with a vanishing impact of order $1/N$.
In this setting, at least when the 
diffusion coefficient does not depend ton the law, \cite{sznitman} proves the so called {\it chaos propagation}
 which means that
  $(\xi^{i,N}_t)_{i=1,\cdots N}$ asymptotically behaves as an i.i.d. sample according to $\mu (t,\cdot)$ as the number of particles $N$ grows to infinity, where $\mu$ is the solution of the regularized non-linear PDE~\eqref{eq:PDEMckean}. 
This in particular 
 implies the convergence of the empirical measures 
$\mu^N_t\xrightarrow[N\rightarrow \infty] {}\mu(t,\cdot)$ with the rate $C/\sqrt{N}$ inherited from the law of large numbers. 
%

As already announced, the case where the coefficients depend pointwisely on
the density law $u(t,\cdot)$ of $\mu(t,\cdot)$,  $t > 0$, 
 is far more singular. Indeed  the dependence of the coefficients on the  law of $Y$ is no  more continuous with respect to the Wasserstein metric. In this context, well-posedness results rely generally on analytical methods.
 One important contribution in this direction is reported in~\cite{JourMeleard}, where strong existence and pathwise uniqueness are established when  
the diffusion coefficient $\sigma$  and the drift $b$ exhibit pointwise dependence on $u$ but are assumed to satisfy strong smoothness assumptions
together with the initial condition.
In this case, the solution $u$ is a classical solution of the PDE
\begin{equation}
\label{eq:PDEMckeanb}
\left\{
\begin{array}{l}
{\displaystyle \partial_t u =\frac{1}{2}\partial_{xx}^2\Big (\sigma^2 (t,x,u(t,x))u\Big )-\partial_x\Big (b(t,x,u(t,x))u\Big )} \\
u(0,x)={\bf u_0}(dx),
\end{array}
\right .
\end{equation}
which is formally derived from \eqref{eq:PDEMckean} setting
$K(x,y) = \delta_{0}(x-y)$.
Let us fix $K^\varepsilon$ being a  mollifier (depending on a window-width
parameter $\varepsilon$),  such that 
$ {\displaystyle  
 K^{\varepsilon}(x,y)=\frac{1}{\varepsilon^d}\phi(\frac{x-y}{\varepsilon})\xrightarrow[\varepsilon \rightarrow 0] {} \delta_0(x-dy)\ .
}$
As in \eqref{eq:MckeanPart1}, we consider
the $N$ interacting particles $(\xi^{i,N})_{i=1,\cdots N}$ 
solving
\begin{equation}
\label{eq:MckeanPart2}
\left\{
\begin{array}{l}
{\displaystyle \xi^{i,N}_t=\xi^{i,N}_0+\int_0^t \sigma\Big (s,\xi^{i,N}_s, u^{N,\varepsilon}_s(\xi^{i,N}_s)\Big )dW^i_s+\int_0^t b\Big (s,\xi^{i,N}_s,u^{N,\varepsilon}_s(\xi^{i,N}_s)\Big)ds}\\
\xi^{i,N}_0\ \textrm{i.i.d.}\,\sim\,{\bf u}_0\\
{\displaystyle u^{N,\varepsilon}_t=\frac{1}{N}{\displaystyle \sum_{j=1}^N K^\varepsilon}(\cdot,\xi^{j,N}_t)}. 
 \end{array}
\right . 
\end{equation}
%
Under the smooth assumptions on $b,\sigma$, $u_0$ mentioned before
and non-degeneracy of $\sigma$, 
\cite{JourMeleard} proved the convergence of the regularized particle approximation $u^{N,\varepsilon}_t$ to the solution $u$ of
 the pointwise non-linear PDE~\eqref{eq:PDEMckeanb}
as soon as 
$\varepsilon (N)\xrightarrow[N\rightarrow \infty]{} 0$ slowly enough.
According to ~\cite{Oelschlager84}, 
the system~\eqref{eq:MckeanPart2} defines a so-called \textit{moderately interacting} particle system
with
 $ u^{N,\varepsilon}_t(x) =
\frac{1}{N \varepsilon^d}  \sum_{j=1}^N \phi(\frac{x - \xi^{j,N}_t}{\varepsilon}).$ 
Indeed  as the window width of the kernel, $\varepsilon$, goes to zero, 
the number of particles that significantly impact a single one is of order ${N \varepsilon^d}$ with a strength of interaction of order  $ \frac{1}{N \varepsilon^d}$. 
In contrast, when $\varepsilon$ is fixed, we recover the weakly interacting situation in which case the strength of interaction of each particle is of order
$\frac{1}{N}$ which is smaller than  $ \frac{1}{N \varepsilon^d}$.
In this case of moderate interaction, the propagation of chaos occurs with a slower rate
than $C/\sqrt{N}$ and depends exponentially on the space dimension.
 \cite{JourMeleard} constitutes an extension of the weak propagation of chaos of moderately interacting particles proved in~\cite{Oelschlager84}
 for the limited case of identity diffusion matrix.  

The peculiar case where the drift vanishes and the diffusion coefficient
$\sigma(u(t,Y_t))$ has a pointwise dependence on the law density $u(t,\cdot)$ of $Y_t$
  has been more particularly studied in~\cite{Ben_Vallois} for classical 
porous media type equations and
\cite{BRR1,BRR2,BCR2,BCR3,BarbuRockSIAM} who obtain well-posedness results for 
measurable and possibly singular  functions $\sigma$.
In that case the solution  $u$ of the associated PDE \eqref{epdeIntro0},
is understood in the sense of distributions.

\section{McKean Feynman-Kac 
 representations for  non-conservative and non-linear PDEs}
\label{sec:NonCons}

\setcounter{equation}{0}


The idea of generalizing MSDEs to MFKEs~\eqref{eq:MckeanExtIntro}
 was originally introduced in the sequence of papers~\cite{LOR1,LOR2,LOR4}, 
with an earlier contribution in \cite{BRR3}, where
$\Lambda(t,x,u, \nabla u) = \xi_t(x),$ $\xi$ being the sample
of a Gaussian noise random field, white in time and regular in space,
see Section \ref{random}.
The goal was to provide some probabilistic representation for non-conservative non-linear PDEs~\eqref{epdeIntro0} by introducing some exponential weights defining Feynman-Kac measures 
instead of probability measures. An interesting aspect of this strategy is that it is potentially able to represent an extended class of second order non-linear PDEs.
One particularity of MFKE equations is that the probabilistic
representation involves the past of the process (via the exponential weights). In this context, it is worth
to quote the recent paper \cite{tomasevic} which
proposes a probabilistic representation, which also includes a dependence on the past, in relation with Keller-Segel model with application to chemiotaxis.

It is important to consider carefully the two major 
features differentiating the MFKE~\eqref{eq:MckeanExtIntro}
from the traditional setting of MSDEs. 
To recover the traditional setting one has to do the following.
\begin{enumerate}
\item
  First, one has to put $\Lambda=0$ in the third line equation of~\eqref{eq:MckeanExtIntro}
  Then $u(t,\cdot)$ is explicitly given by the third line equation 
  of ~\eqref{eq:MckeanExtIntro} and reduces to the density of 
the marginal distribution, $\mathcal{L}(Y_t)$. 
When $\Lambda \neq 0$, the relation between $u(t,\cdot)$ and the process
$Y$ is more complex.  
Indeed, not only does $\Lambda$ embed an additional non-linearity with respect to $u$, but it also involves the whole past trajectory $(Y_s)_{0\leq s\leq t}$ of the process $Y$.  
\item Secondly, one has to replace
  the pointwise dependence $b(s,Y_s,u(s,Y_s))$ in equation
~\eqref{eq:MckeanExtIntro}
with a mollified dependence $b(s,Y_s,\int_{\R^d} K(Y_s- y)u(s,y)dy)$, where the dependence with respect to $u(s, \cdot)$  is Wasserstein  continuous.
Here $K: \R \rightarrow \R$ is a convolution kernel.
\end{enumerate}

One interesting aspect of probabilistic representation~\eqref{eq:MckeanExtIntro} is that it  naturally yields numerical approximation schemes involving weighted interacting particle systems.
More precisely, we consider $N$ interacting particles $(\xi^{i,N})_{i=1,\cdots N}$  with $N$ i.i.d. Brownian motions $ (W^i)_{i=1,\cdots N}$, i.e. 
\begin{equation}
\label{eq:Mckean}
\left\{
\begin{array}{l}
\xi^{i,N}_t=\xi^{i,N}_0+\int_0^t \sigma\Big (s,\xi^{i,N}_s, u^{N,\varepsilon}_s(\xi^{i,N}_s)\Big )dW^i_s+\int_0^t b\Big (s,\xi^{i,N}_s,u^{N,\varepsilon}_s(\xi^{i,N}_s)\Big )ds\\
\xi^{i,N}_0\ \textrm{i.i.d.}\,\sim\,{\bf u_0}\\
u^{N,\varepsilon}_t(\xi^i_t)={\displaystyle \sum_{j=1}^N \,\omega_t^{j,N}\,K^{\varepsilon}(\xi^{i,N}_t-\xi^{j,N}_t)}\ ,
\end{array}
\right . 
\end{equation}
where the mollifier $K^{\varepsilon}$ is such that 
$K^{\varepsilon}(x)=\frac{1}{\varepsilon^d}\phi(\frac{x}{\varepsilon})\xrightarrow[\varepsilon \rightarrow 0] {} \delta_0$ and the weights $\omega_t^{j,N}$ for  $j=1,\cdots, N$  verify
\begin{eqnarray*}
\omega_t^{j,N}&:=&\exp\left \{\,\int_0^t \Lambda \Big (r,\xi_r^{j,N},u_r^{\varepsilon , N}(\xi_r^{j,N}),\nabla u_r^{\varepsilon , N}(\xi_r^{j,N})\Big )\,dr\right \}\\
&=&
\omega^{j,N}_{s}\exp\left \{\,\int_{s}^{t} \Lambda \Big (r,\xi_r^{j,N},u_r^{\varepsilon , N}(\xi_r^{j,N}),\nabla u_r^{\varepsilon , N}(\xi_r^{j,N})\Big )\,dr\right \}.
\end{eqnarray*}

  
\cite{LOR3,LOR4} consider the case of pointwise semilinear PDEs of the form
\begin{equation}
\label{eq:PDEMckeanExtb}
\left\{
\begin{array}{l}
\partial_t u =\frac{1}{2}\partial_{xx}^2(\sigma^2(t,x)u)-\partial_x b(t,x)u) 
+\Lambda(t,x,u,\nabla u)u \\
u(0,x)=u_0(x),
\end{array}
\right .
\end{equation}
for which the target probabilistic representation is 
\begin{equation}
\label{eq:MckeanExt1}
\left\{
\begin{array}{l}
Y_t=Y_0+\int_0^t \sigma\Big (s,Y_s\Big )dW_s+\int_0^t b\Big (s,Y_s\Big )ds\\ 
Y_0\,\sim\,{\bf u_0}\\ 
{\displaystyle \int \varphi(x)u_t(x)dx:=\E\left [\,\varphi(Y_t)\,\,\exp\Big \{\int_0^t\Lambda\big (s,Y_s,{u}_s(Y_s),\nabla u_s(Y_s)\big )ds\Big \}\,\right]}.
\end{array}
\right . 
\end{equation}
We set 
\begin{equation} \label{ELt}
L_t f := \halb \sigma^2(t,x) f''(x) + b(t,x) f'(x), t \in ]0,T[, \quad \textrm{for any}\ f \in C^2(\R).
\end{equation}
Let us consider the family of Markov 
transition functions $P(s,x_0,t, \cdot)$
associated with $(L_t)$, see \cite{LOR3}.
We recall that if $X$ is a processs solving the first line of \eqref{eq:Mckean}
with $X_s \equiv x_0 \in \R$, then
$ \int_\R P(s,x_0,t,x) f(x) dx = \E(f(X_t)), t \ge s, $
for every bounded Borel function $f :\R \rightarrow \R$. 
$u: [0,T] \times \R \rightarrow \R$ will be called  {\bf{mild solution}} of  
\eqref{eq:PDEMckeanExtb} (related to $(L_t)$)
if for all $\varphi \in \shc_0^{\infty}(\R)$, $t \in [0,T]$,
\begin{eqnarray}
\label{eq:DefMildSol}
\int_{\R^d} \varphi(x) u(t,x)dx & = & \int_{\R^d} \varphi(x) \int_{\R^d} 
{\bf u_0}(dx_0) P(0,x_0,t,dx)  \nonumber \\
&& + \; \int_{[0,t] \times \R^d} \Big( \int_{\R^d} \varphi(x) P(s,x_0,t,dx) \Big) \Lambda(s,x_0,u(s,x_0), \nabla u(s,x_0)) u(s,x_0)dx_0 ds.   \nonumber 
\end{eqnarray}
The following theorem states conditions ensuring equivalence between~\eqref{eq:MckeanExt1} and~\eqref{eq:PDEMckeanExtb}  together with the convergence of the related particle approximation~\eqref{eq:Mckean}. 
\begin{thm}
\label{theo:A1} We suppose that $\sigma$ and $b$ are Lipschitz with linear growth
 and $\Lambda$ is bounded.
\begin{enumerate}
\item Let $u:[0,T] \times \R \rightarrow \R \in L^1 ([0,T]; W^{1,1}(\R^d)$.
$u$ is a mild solution of  PDE~\eqref{eq:PDEMckeanExtb}
if and only if $u$ verifies \eqref{eq:MckeanExt1}. 
\item  Suppose that $\sigma \ge c > 0$ and $\Lambda$ is uniformly Lipschitz 
w.r.t. to $u$ and $\nabla u$.
There is a unique mild solution in $L^1 ([0,T]; W^{1,1}(\R) \cap L^\infty ([0,T] \times \R)$
of
\eqref{eq:PDEMckeanExtb}, therefore also of \eqref{eq:MckeanExt1}. 
\item Under the same assumption of item 2., the particle approximation $u^{N,\varepsilon}$~\eqref{eq:Mckean}  converges
in $L^1 ([0,T]; W^{1,1}(\R)$
to the solution of~\eqref{eq:PDEMckeanExtb} as $N\rightarrow \infty$ and $\varepsilon (N)\rightarrow 0$ slowly enough.
\end{enumerate}
\end{thm}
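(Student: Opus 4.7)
The plan is to handle the three items in order, leveraging the Feynman-Kac representation of linear non-conservative Cauchy problems as the basic building block, and then closing a Picard fixed-point argument for the non-linearity and propagating it to the particle system. For item 1, I would fix a candidate $u \in L^1([0,T]; W^{1,1}(\R^d))$ and freeze the potential $V_u(s,x) := \Lambda(s,x,u(s,x),\nabla u(s,x))$, which is bounded by hypothesis on $\Lambda$. The PDE \eqref{eq:PDEMckeanExtb} then becomes the \emph{linear} non-conservative equation $\partial_t v = L_t^\ast v + V_u v$ with initial datum ${\bf u_0}$. Applying It\^o's formula to $\varphi(Y_t)\exp\bigl(\int_0^t V_u(s,Y_s)\,ds\bigr)$, where $Y$ solves the first line of \eqref{eq:MckeanExt1}, and taking expectations yields the Feynman-Kac formula; conditioning on $Y_s = x_0$ through the Markov transition kernel $P(s,x_0,t,\cdot)$ produces exactly the mild formulation \eqref{eq:DefMildSol}. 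The reverse implication follows because mild solutions of this frozen linear equation are unique (by a Gronwall argument on the Dyson iteration using $\|V_u\|_\infty < \infty$).

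For item 2, I would introduce $\mathcal{Y} := L^1([0,T]; W^{1,1}(\R)) \cap L^{\infty}([0,T]\times \R)$ and the Picard map $\Phi: u \mapsto v$, where $v$ is the unique mild solution of the frozen linear equation with potential $V_u$. Stability of $\mathcal{Y}$ under $\Phi$ and contraction on a small interval $[0,\tau]$ both rest on Aronson-type Gaussian upper and lower bounds for the fundamental solution $p(s,x_0;t,x)$ of $L_t$, available under the non-degeneracy $\sigma \ge c > 0$ and the Lipschitz regularity of $\sigma,b$. These estimates give $\|v(t,\cdot)\|_\infty$ and $\|\nabla v(t,\cdot)\|_{L^1}$ uniform controls of the form $\|\nabla_x p(s,\cdot;t,\cdot)\|_{L^1}\lesssim (t-s)^{-1/2}$. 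Combining with the Lipschitz property of $\Lambda$ in $(u,\nabla u)$ and the bound on $\Lambda$ yields a contraction estimate $\|\Phi(u_1)-\Phi(u_2)\|_{\mathcal{Y}(\tau)}\le C\tau^{\alpha}\|u_1-u_2\|_{\mathcal{Y}(\tau)}$ for some $\alpha>0$. Choosing $\tau$ small and iterating gives a unique fixed point on $[0,T]$, which by item 1 is the unique mild solution of both \eqref{eq:PDEMckeanExtb} and \eqref{eq:MckeanExt1}.

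For item 3, the crucial observation is that in the semilinear setting $\sigma, b$ do not depend on $u$, so the particles $(\xi^{i,N})$ are actually i.i.d.\ copies of $Y$; the non-linearity is entirely carried by the weights $\omega^{i,N}_t$, which are self-referentially coupled through $u^{N,\varepsilon}$. I would compare $u^{N,\varepsilon}$ to $u$ by a Gronwall argument after introducing the \emph{oracle} weights $\bar\omega^{j}_t := \exp\bigl(\int_0^t \Lambda(s,\xi^{j,N}_s,u(s,\xi^{j,N}_s),\nabla u(s,\xi^{j,N}_s))\,ds\bigr)$ computed with the limiting $u$, and the oracle kernel estimator $\bar u^{N,\varepsilon}(t,x) := \tfrac{1}{N}\sum_{j} \bar\omega^{j}_t K^\varepsilon(x-\xi^{j,N}_t)$. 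The Feynman-Kac representation \eqref{eq:MckeanExt1} combined with classical kernel density estimation gives $\bar u^{N,\varepsilon}\to u$ in $L^1([0,T]; W^{1,1}(\R))$ whenever the bandwidth condition $N\varepsilon(N)^{d+2}\to\infty$ holds. Boundedness of $\Lambda$ yields $|\omega^{j,N}_t|, |\bar\omega^{j}_t| \le e^{T\|\Lambda\|_\infty}$, and its Lipschitz dependence on $(u,\nabla u)$ gives
\[
|\omega^{j,N}_t-\bar\omega^{j}_t| \le C\int_0^t\bigl(|u^{N,\varepsilon}-u|+|\nabla u^{N,\varepsilon}-\nabla u|\bigr)(s,\xi^{j,N}_s)\,ds.
\]
Inserting this into $u^{N,\varepsilon}-\bar u^{N,\varepsilon}$ and integrating in $x$ produces
\[
\|u^{N,\varepsilon}-u\|_{\mathcal{Y}(t)} \le \eta(N,\varepsilon) + C\int_0^t \|u^{N,\varepsilon}-u\|_{\mathcal{Y}(s)}\,ds,
\]
with $\eta(N,\varepsilon)\to 0$, whence Gronwall concludes. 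The \textbf{main obstacle} is precisely the treatment of the gradient term $\nabla u^{N,\varepsilon}-\nabla u$: differentiating $K^\varepsilon$ inflates the variance of the kernel estimator by a factor $\varepsilon^{-2}$, so the rate at which $\varepsilon(N)\to 0$ must be carefully balanced against $N$ in order to kill simultaneously the bias and the (derivative) variance of the kernel estimator. This is what forces the ``slow enough'' qualifier on $\varepsilon(N)$ and forbids any rate better than kernel density estimation in $W^{1,1}$.
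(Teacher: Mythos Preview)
The paper does not actually prove this theorem: immediately after the statement it simply records that item~1 is Theorem~3.5, item~2 is Theorem~3.6, and item~3 is Corollary~3.5 of the cited reference \cite{LOR3}. This is a survey article, and the result is quoted rather than reproved. So there is no in-paper argument to compare against line by line.

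That said, your sketch is a faithful outline of the standard route one would expect those references to follow. For item~1, freezing $V_u=\Lambda(\cdot,\cdot,u,\nabla u)$ and invoking the linear Feynman--Kac identity via It\^o on $\varphi(Y_t)\exp\bigl(\int_0^t V_u(s,Y_s)\,ds\bigr)$, then conditioning through the transition kernel $P$, is exactly how one passes between the mild formulation \eqref{eq:DefMildSol} and the linking equation in \eqref{eq:MckeanExt1}. For item~2, the Picard map on $L^1([0,T];W^{1,1})\cap L^\infty$ with contraction driven by Aronson-type gradient bounds $\|\nabla_x p(s,\cdot;t,\cdot)\|_{L^1}\lesssim(t-s)^{-1/2}$ is the natural mechanism under $\sigma\ge c>0$ and Lipschitz $\Lambda$. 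For item~3, your observation that the particles are genuinely i.i.d.\ (since $\sigma,b$ in \eqref{eq:PDEMckeanExtb} do not depend on $u$) and that all the coupling lives in the weights is the key structural point; the oracle-weight decomposition plus Gronwall is the right skeleton.

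One place where your sketch is thin and would need real work to close: in the Gronwall step for item~3 you pass from the pointwise bound $|\omega^{j,N}_t-\bar\omega^{j}_t|\le C\int_0^t(|u^{N,\varepsilon}-u|+|\nabla u^{N,\varepsilon}-\nabla u|)(s,\xi^{j,N}_s)\,ds$, which is evaluated at the \emph{random particle positions} $\xi^{j,N}_s$, directly to the deterministic norm $\|u^{N,\varepsilon}-u\|_{\mathcal{Y}(s)}$. Bridging this requires controlling $\E\bigl[|f|(\xi^{j,N}_s)\bigr]$ by $\|f\|_{L^1}$, which in turn needs a uniform $L^\infty$ bound on the marginal densities of the particles (available here from Aronson upper bounds, since the particles solve a linear SDE). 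You should make that step explicit, as it is also where the interplay between the $W^{1,1}$ norm and the $\varepsilon^{-1}$ blow-up of $\nabla K^\varepsilon$ has to be balanced carefully to get $\eta(N,\varepsilon)\to 0$.
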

Item 1. was the object of  
 Theorem 3.5 in \cite{LOR3}.
 Item 2. (resp. item 3.)
 was treated in Theorem 3.6 (resp. Corollary 3.5) in \cite{LOR3}.  
\begin{rem} \label{RError} The error induced by the discrete time approximation
of the particle system was evaluated in \cite{LOR4}.
\end{rem}

\cite{LieberOR} considers the case where $b$ is replaced by $b + b_1$
where $b$ is only supposed bounded Borel, without regularity assumption
on the space variable. In particular they treat the
pointwise semilinear PDEs of the form
\begin{equation}
\label{eq:PDEMckeanExtc}
\left\{
\begin{array}{l}
\partial_t u =\frac{1}{2}\partial_{xx}^2(\sigma^2(t,x)u)-\partial_x\Big 
(\big (b(t,x)+b_1(t,x,u)\big )u\Big )+\Lambda(t,x,u)u\\ 
u(0,x)=u_0(x),
\end{array}
\right .
\end{equation}
for which the target probabilistic representation is
\begin{equation}
\label{eq:MckeanExt2}
\left\{
\begin{array}{l}
Y_t=Y_0+\int_0^t \sigma (s,Y_s )dW_s+\int_0^t \big [  b (s,Y_s )+
b_1\Big (s,Y_s, u(s,Y_s)\Big )\big ]ds\\ 
Y_0\,\sim\,{\bf u_0}\\ 
{\displaystyle \int \varphi(x)u_t(x)dx:=\E\left [\,\varphi(Y_t)\,\exp\Big \{\int_0^t\Lambda\big (s,Y_s,{u}_s(Y_s)\big )ds\Big \}\,\right]}.
\end{array}
\right . 
\end{equation}
The following theorem states conditions ensuring equivalence between~\eqref{eq:MckeanExt2} and~\eqref{eq:PDEMckeanExtc}  together with well-posedness conditions for both equations. 
\begin{thm} We formulate the following assumptions.
\begin{enumerate}
\item The PDE in the sense of distributions $\partial u = L_t^* u_t$ admits
as unique solution $u \equiv 0,$
where $L_t$ was defined in \eqref{ELt}.
 \item $b$ is bounded measurable and $\sigma$ is continuous $\sigma \ge c > 0$ for some constant $c > 0$.
\item $b_1, \Lambda:[0,T]\times \mathbb{R} \times \mathbb{R} \rightarrow \mathbb{R}$ is uniformly bounded, Lipschitz with respect to the third argument.
\item The family of Markov 
transition functions 
associated with $(L_t)$, are of the form
   $P(s,x_0,t, dx) =  p(s,x_0,t,x) dx,$,
i.e. they admit measurable densities $p$.
\item The first order partial derivatives of the map $x_0 \mapsto p(s,x_0,t,x)$ exist in the distributional sense.
\item For almost all $0 \leq s < t \leq T$ and $x_0,x \in \R$ there are constants $C_u,c_u>0$ 
such that 
\begin{align}
\label{eq:1011a}
 p(s,x_0,t,x) \leq C_u
q(s,x_0,t,x)
\quad 
and
\quad
\left\vert \partial_{x_0} p(s,x_0,t,x) \right\vert \leq C_u
 \frac{1}{\sqrt{t-s}} q(s,x_0,t,x)\ ,
\end{align}
where
$q(s,x_0,t,x):=\left(\frac{c_u(t-s)}{\pi}\right)^{\frac{1}{2}} e^{-c_u \frac{\vert x-x_0 \vert^2}{t-s}}$ 
is a Gaussian probability density.
\end{enumerate}
The following results hold.
\begin{enumerate}
\item Let $u  \in (L^1 \cap L^\infty)([0,T] \times \R)$.
$u$ is a  solution of  PDE~\eqref{eq:PDEMckeanExtc} in the sense of distributions
if and only if
$u$ verifies  \eqref{eq:MckeanExt2} for a solution $Y$
 in the sense of probability laws. 
\item There is a unique solution $u  \in (L^1 \cap L^\infty)([0,T] \times \R)$
in the sense of distributions of PDE~\eqref{eq:PDEMckeanExtc} (and therefore of\eqref{eq:MckeanExt2}).
\end{enumerate}
\end{thm}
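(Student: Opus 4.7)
The plan is to reduce both claims to the linear theory by freezing $u$ inside the nonlinearities. Throughout, let $X$ denote the weak solution of $dX_t = \sigma(t,X_t) dW_t + b(t,X_t) dt$, $X_0 \sim {\bf u_0}$, whose existence and transition density $p$ come from assumptions 2 and 4. Given a candidate $u \in (L^1 \cap L^\infty)([0,T] \times \R)$, the boundedness of $b_1/\sigma$ (from assumptions 2 and 3) makes $Z_t = \exp\bigl(\int_0^t \tfrac{b_1}{\sigma}(s,X_s,u(s,X_s)) dW_s - \halb \int_0^t \bigl(\tfrac{b_1}{\sigma}\bigr)^2 ds\bigr)$ a genuine $\P$-martingale, and Girsanov under $\tilde \P := Z_T \cdot \P$ turns $X$ into a process $Y$ with the drift $b + b_1(\cdot,\cdot,u(\cdot,\cdot))$ required by \eqref{eq:MckeanExt2}.

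For Part 1, once $u$ is frozen the coefficients $b_1(\cdot,\cdot,u)$ and $\Lambda(\cdot,\cdot,u)$ are just bounded measurable functions of $(t,x)$, so $u$ being a distributional solution of the nonlinear PDE \eqref{eq:PDEMckeanExtc} is the same as $u$ being a distributional solution of the linear equation
\[
\partial_t v = L_t^{\ast} v - \partial_x(b_1(\cdot,\cdot,u) v) + \Lambda(\cdot,\cdot,u) v, \qquad v(0,\cdot) = {\bf u_0}.
\]
Applying It\^o under $\tilde \P$ to $\varphi(Y_t) \exp\bigl(\int_0^t \Lambda(s,Y_s,u(s,Y_s)) ds\bigr)$ for $\varphi \in C_c^{\infty}(\R)$ shows that the Feynman-Kac measure on the right-hand side of the linking equation is itself a distributional solution of this linear PDE. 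Uniqueness of that linear PDE --- derived from assumption 1 by absorbing the bounded measurable perturbations via a further Girsanov transformation plus an exponential weight --- identifies $u$ with the Feynman-Kac measure, yielding the claimed equivalence in both directions.

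For Part 2 I would run a Picard contraction. After integrating by parts in $x_0$ using assumption 5, the frozen linear PDE gives the mild form
\[
u(t,x) = \int p(0,x_0,t,x) {\bf u_0}(dx_0) + \int_0^t \!\!\int \bigl[ b_1(s,x_0,u) u \, \partial_{x_0} p(s,x_0,t,x) + \Lambda(s,x_0,u) u \, p(s,x_0,t,x) \bigr] dx_0 \, ds.
\]
For two candidate solutions $u_1, u_2$, the Lipschitz dependence of $b_1, \Lambda$ in their third argument, the $L^\infty$ control, and the Gaussian bounds of assumption 6 combine to give
\[
\|u_1(t) - u_2(t)\|_{L^1 \cap L^\infty} \le C \int_0^t \frac{1}{\sqrt{t-s}} \|u_1(s) - u_2(s)\|_{L^1 \cap L^\infty} \, ds,
\]
and a generalized Gronwall inequality absorbing the integrable $(t-s)^{-1/2}$ singularity forces $u_1 = u_2$. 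Existence is then obtained by showing that the map $u \mapsto v$ of Part 1 is a contraction on $(L^1 \cap L^\infty)([0,T_0] \times \R)$ for some $T_0 > 0$, via the same estimate, and iterating to reach $[0,T]$.

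The main obstacle I anticipate is the uniqueness step for the frozen linear PDE, because assumption 1 only addresses the bare operator $L_t^{\ast}$. Extending it to the presence of $b_1(\cdot,u)$ and $\Lambda(\cdot,u)$ demands either a duality argument testing the difference of two solutions against the solution of the adjoint backward equation --- whose well-posedness and mild regularity rest precisely on the Gaussian estimates of assumptions 5 and 6 --- or a second Girsanov-plus-exponential-weight reduction that folds the perturbations into the reference dynamics. All the probabilistic content of the theorem is channeled through this linear uniqueness mechanism.
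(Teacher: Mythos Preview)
The paper does not prove this theorem; immediately after the statement it records that Part~1 is Theorem~12 and Part~2 combines Proposition~16 and Theorems~13,~22 of \cite{LieberOR}. So there is no in-text argument to compare against, only the architecture one expects from that reference.

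Your outline is sound and matches that expected architecture: freeze $u$, use Girsanov to manufacture the drift $b+b_1(\cdot,\cdot,u)$, verify by It\^o that the weighted law is a distributional solution of the frozen linear PDE, and for Part~2 run a contraction on the mild form using the Aronson-type bounds of assumption~6. The singular Gronwall inequality with the integrable $(t-s)^{-1/2}$ kernel is exactly the right estimate.

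The one point worth sharpening is the role of assumption~1, which you correctly flag as the delicate step. The clean mechanism is not a second Girsanov but rather to treat the lower-order terms as a source. If $w\in L^1\cap L^\infty$ is any distributional solution of $\partial_t w = L_t^{\ast} w - \partial_x\bigl(b_1(\cdot,u)\, w\bigr) + \Lambda(\cdot,u)\, w$ with $w(0)=0$, form the Duhamel expression
\[
\tilde w(t,x)=\int_0^t\!\!\int_{\R}\Bigl[b_1(s,x_0,u)\,w(s,x_0)\,\partial_{x_0}p(s,x_0,t,x)+\Lambda(s,x_0,u)\,w(s,x_0)\,p(s,x_0,t,x)\Bigr]dx_0\,ds.
\]
Then $\tilde w$ is itself a distributional solution of the same equation, so $w-\tilde w$ solves the bare equation $\partial_t(w-\tilde w)=L_t^{\ast}(w-\tilde w)$ with zero initial data; assumption~1 forces $w=\tilde w$, i.e.\ $w$ satisfies the homogeneous mild equation, and your $(t-s)^{-1/2}$ Gronwall then gives $w=0$. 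This is precisely how assumption~1 enters both parts: it upgrades ``distributional solution'' to ``mild solution'', after which the Gaussian bounds of assumptions~5--6 do all the work. Your Girsanov-plus-exponential-weight heuristic, by contrast, acts on the process and its law and does not directly deliver uniqueness for an arbitrary signed distributional solution $w$; the duality alternative you mention would work but is heavier than needed here.
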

The result 1. (resp. result 2.) was the object of Theorem 12.
 (resp. Proposition 16., Theorems 13., 22.) of  \cite{LieberOR}.

\begin{rem} 
  Under more restrictive assumptions on $b$,  
item 3. of Theorem 13. in
\cite{LieberOR} 
states the well-posedness of \eqref{eq:MckeanExt2}) in the sense of strong existence
and pathwise uniqueness.

\end{rem}

\cite{LOR1} and \cite{LOR2} studied a mollified version of
\eqref{epdeIntro0}, whose probabilistic representation 
 falls into the Wasserstein continuous
  traditional setting mentioned above. Following the spirit of \cite{sznitman}, a fixed point argument was carried out  in the general case in~\cite{LOR1} to prove well-posedness of 
\begin{equation}
\label{eq:MckeanExt2Bis}
\left\{
\begin{array}{l}
Y_t=Y_0+\int_0^t \sigma\Big (s,Y_s,K\ast u_s(Y_s)\Big )dW_s+\int_0^t b\Big (s,Y_s,K\ast u_s(Y_s)\Big )ds\\ 
Y_0\,\sim\,{\bf u_0}\\ 
{\displaystyle (K\ast  u_t)(x):=\E\left [K(x-Y_t)\,\,\exp\Big \{\int_0^t\Lambda\big (s,Y_s,{K\ast u}_s(Y_s)\big )ds\Big \}\,\right]},
\end{array}
\right . 
\end{equation}
where $K: \R \rightarrow \R$ is a mollified kernel. 
We remark that if $(Y,u)$ is a solution of \eqref{eq:MckeanExt2Bis},
then  $u$ is a solution 
(in the sense of distribution) of
\begin{equation}
\label{eq:PDEMckeanExta}
\left\{
\begin{array}{l}
\partial_t u =\frac{1}{2}\partial_{xx}^2(\sigma^2(t,x,K\ast u)u)-\partial_x(b(t,x,K\ast u)u)+\Lambda(t,x,K\ast u)u\\ 
u(0,x)=u_0(x).
\end{array}
\right .
\end{equation}

\begin{rem}
\label{theo:A2}
\begin{enumerate} 
\item Existence and uniqueness results (in the strong sense
and in the sense of probability laws) for the MFKE \eqref{eq:MckeanExt2Bis}
 are established under various technical assumptions, see \cite{LOR2}.
\item
Chaos propagation for the interacting particle system~\eqref{eq:Mckean} providing an approximation to the regularized PDE~\eqref{eq:PDEMckeanExta}, as $N\rightarrow\infty$ 
(for fixed $K$), \cite{LOR1}.
\end{enumerate}
\end{rem}


\section{McKean representation of a Fokker-Planck equation
with terminal condition}

\label{TimeRev}
\setcounter{equation}{0}

Let us consider the PDE with terminal condition~\eqref{EDPTerm}
and $\Lambda = 0$.
\begin{equation} \label{EDPTerm0}
\left \{
\begin{array}{lll}
\partial_t u &=& \frac{1}{2} 
\displaystyle{\sum_{i,j=1}^d} \partial_{ij}^2 \left( (\sigma \sigma^t)_{i,j}(t,x) u \right) - div \left( b(t,x) u \right) \\
u(T,dx) &=& {\bf u_T}(dx),
\end{array}
\right .
\end{equation}
where ${\bf u_T}$ is a given Borel probability measure.
In the present section we assume the following.
\begin{ass} \label{AA}
Suppose that \eqref{EDPTerm0} admits uniqueness, i.e. that
there is at most one solution of \eqref{EDPTerm0}.
\end{ass}
\begin{rem}
Different classes of sufficient conditions for that
are provided in \cite{LucasOR}.
\end{rem}


A natural representation of \eqref{EDPTerm0} 
is the following MSDE,
where $\beta$ is a Brownian motion.
\begin{equation}\label{MK}
\left \{
\begin{array}{c}
\displaystyle Y_t = \xi - \int^{t}_{0} \tilde b\left(s,Y_s; v_s \right) ds + 
\int^{t}_{0} \sigma \left(T-s,Y_s\right)d\beta_s, t \in [0,T]    \\
\int_{\R^d} v_{t}(x) \varphi(x) dx = \E(\varphi(Y_t)), t \in [0,T]  \\
\xi \sim {\bf u_T}, 
\end{array}
\right.
\end{equation}
where $\tilde b(s,y;v_s) = 
(\tilde b^1(s,y;v_s) ,\ldots, \tilde b^d(s,y;v_s))$ is defined as
\begin{equation} \label{MKB}
\tilde{b}\left(s,y;v_s\right) := \left[\frac{\mathop{div}_y\left(\sigma \sigma^t_{j.}
\left(T-s,y\right)
v_{s}\left(y\right)\right)}{v_{s}\left(y\right)}\right]_{j\in[\![1,d]\!]}
 - b \left(T-s,y\right).
\end{equation}
For $d= 1$ previous expression gives
\begin{equation} \label{Ed=1}
\tilde{b}(s,y;v_s) := \frac{\left(\sigma^2(T-s,\cdot)
v_{s} \right)'}{v_{s}}(y) - b \left(T-s,y\right).
\end{equation}
\begin{rem} \label{RHP}
\eqref{MK} is in particular fulfilled if $Y$ is the time reversal process
$\hat X_t := X_{T-t}$ of a diffusion 
satisfying the SDE
\begin{equation}
\label{eq:X} 
\left \{
\begin{array}{l}
{\displaystyle 
X_t=X_0+\int_0^t b(s,X_s)ds+\sigma(s,X_s)dW_s, t \in [0,T]
}\\
X_0\sim {\bf u_0} \in\mathcal{P}(\R).
\end{array}
\right .
\end{equation}
This happens under locally Lipschitz conditions on $\sigma$
and $b$ and minimal regularity conditions 
on the law density $p_t$ of $X_t$. 
Indeed in
\cite{haussmann_pardoux}, the authors prove that 
\begin{equation}\label{EHP}
\displaystyle \hat X_t = X_T + \int^{t}_{0} \tilde b\left(s,\hat X_s; p_{T-s}\right)
ds + 
\int^{t}_{0} \sigma \left(T-s,\hat X_s\right)d\beta_s, \ t \in [0,T], 
\end{equation}
where $\tilde b$ is defined in \eqref{MKB} and
$p_{t}$ is the density of $X_t$.
We emphasize that the main difference between 
 \eqref{MK} and \eqref{EHP}  is that in the first equation
the solution is a couple $(Y,v)$, in the second one, a solution is just $Y$,
$p$ being exogeneously defined by \eqref{eq:X}.
\end{rem}

We observe now that a solution $(Y,v)$ of \eqref{MK}
provides a solution $u$ of
\eqref{EDPTerm0}. This justifies indeed the terminology
of probabilistic representation.
\begin{prop} \label{PlinkingTR}
\begin{enumerate}
\item Let $(Y,v)$ be a solution of \eqref{MK}. Then
 $u(t, \cdot) := v(T-t,\cdot), t \in [0,T]),$
is a solution of \eqref{EDPTerm0} with terminal value ${\bf u_T}$.
\item 
If \eqref{EDPTerm0} admits at most one solution, then there is at
 most one $v$ such that
 $(Y,v)$ solves \eqref{MK}.
\end{enumerate}

\end{prop}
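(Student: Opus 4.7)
The plan for Part~1 is to apply It\^o's formula to $\varphi(Y_t)$ for $\varphi\in C_c^\infty(\R^d)$, turn the resulting identity into a weak PDE for $v$ via the linking equation, and then invert time through $u(t,\cdot):=v(T-t,\cdot)$ to recover \eqref{EDPTerm0}. Part~2 will then be an immediate consequence of Part~1 together with Assumption~\ref{AA}.

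Concretely, taking the expectation in It\^o's formula along \eqref{MK} kills the martingale part, and invoking the linking equation $\E[\varphi(Y_s)]=\int \varphi\,v_s\,dx$ on both the left-hand side and on the integrand of the drift and trace terms yields
\begin{equation*}
\int \varphi\,v_t\,dx-\int\varphi\,{\bf u_T}(dx)= \int_0^t\!\!\int v_s(y)\Bigl[-\nabla\varphi(y)\cdot\tilde{b}(s,y;v_s)+\tfrac12\mathrm{Tr}\bigl((\sigma\sigma^t)(T-s,y)\,\nabla^2\varphi(y)\bigr)\Bigr]dy\,ds.
\end{equation*}
The crucial algebraic point is that \eqref{MKB} is written precisely so that $\tilde{b}^j(s,y;v_s)\,v_s(y)=\mathrm{div}_y\bigl((\sigma\sigma^t)_{j\cdot}(T-s,y)\,v_s(y)\bigr)-b^j(T-s,y)\,v_s(y)$, so the apparent $1/v_s$ factor never enters the integrated expression. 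Integrating by parts on the divergence term (the compact support of $\varphi$ discards boundary contributions) transfers one derivative onto $\varphi$ and produces a contribution in $\partial_{ij}^2\varphi\,(\sigma\sigma^t)_{ij}(T-s,\cdot)\,v_s$ that combines with the It\^o trace $\tfrac12\sum_{i,j}\partial_{ij}^2\varphi\,(\sigma\sigma^t)_{ij}(T-s,\cdot)\,v_s$ to reconstitute the second-order part of the formal adjoint $L_{T-s}^*$ of the operator on the right of \eqref{EDPTerm0}. The residual $\nabla\varphi\cdot b(T-s,\cdot)\,v_s$ term supplies its drift part.

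The resulting weak forward equation $\partial_s v=-L_{T-s}^* v$ on $s\in[0,T]$ with $v(0,\cdot)={\bf u_T}$ becomes, under $u(t,\cdot):=v(T-t,\cdot)$, the weak form of $\partial_t u=L_t^* u$ on $[0,T]$ with $u(T,\cdot)={\bf u_T}$, i.e.\ exactly \eqref{EDPTerm0}; the terminal condition comes directly from the initial law of $Y$ in \eqref{MK}. For Part~2, if $(Y^1,v^1)$ and $(Y^2,v^2)$ both solve \eqref{MK}, then Part~1 produces two solutions $u^i(t,\cdot):=v^i(T-t,\cdot)$ of \eqref{EDPTerm0} sharing the terminal value ${\bf u_T}$, and Assumption~\ref{AA} forces $u^1=u^2$, hence $v^1=v^2$.

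The principal obstacle is the rigorous justification of the integration by parts and of the manipulations above: one needs enough regularity on $v_s$ for $\mathrm{div}\bigl((\sigma\sigma^t)_{j\cdot}(T-s,\cdot)\,v_s\bigr)$ to be a well-defined distribution, and, more delicately, a solution concept in which the product $\tilde{b}\,v_s$ is meaningful even on $\{v_s=0\}$, where \eqref{MKB} is literally undefined. The factor $v_s$ absorbed into the definition is exactly what one exploits to sidestep the singularity, but casting the calculation above within an appropriate mild/distributional framework is the technical heart of the argument.
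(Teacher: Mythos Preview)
Your approach is essentially identical to the paper's: apply It\^o's formula to $\varphi(Y)$, take expectations via the linking equation, substitute $\tilde b\,v_s$ using \eqref{MKB}, integrate by parts, and perform the time change $t\mapsto T-t$; the paper merely specializes to $d=1$ for readability and applies It\^o on $[0,T-t]$ rather than on $[0,t]$, which is an immaterial difference.

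One sign caveat is worth flagging. With the drift $-\tilde b$ literally as displayed in \eqref{MK}, your integrand $-\nabla\varphi\cdot\tilde b\,v_s$ after integration by parts produces $+\sum_{ij}\partial_{ij}^2\varphi\,(\sigma\sigma^t)_{ij}v_s$, and adding the It\^o trace $+\tfrac12\sum_{ij}\partial_{ij}^2\varphi\,(\sigma\sigma^t)_{ij}v_s$ gives a coefficient $\tfrac32$, not the $-\tfrac12$ needed to reconstitute $-L_{T-s}\varphi$. The paper's own proof writes $+\tilde b\,\varphi'$ in the It\^o expansion, and the time-reversal formula \eqref{EHP} also carries drift $+\tilde b$; so \eqref{MK} appears to contain a sign typo. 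With the intended sign (drift $+\tilde b$), your ``combines to reconstitute'' claim is exactly right and the remainder of the argument goes through as you describe.
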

\begin{proof} \
 In order to avoid technicalities which complicate the task of the reader we 
 express the proof for $d = 1$.
We prove 1. since 2. is an immediate consequence of 1.

Let $\phi \in \mathcal{C}^{\infty}\left(\mathbb{R}\right)$
with compact support
 and $t\in[0,T]$. 
It\^{o} formula gives
\begin{equation*}
 \mathbb{E}\left[\phi\left(Y_{T-t}\right)\right] - \int_{\R^d}\phi\left(y\right)
{\bf u_T}\left(dy\right) = \int^{T-t}_0\mathbb{E}\left[\tilde{b} 
\left(s,Y_s;v_s\right)\phi'\left(Y_s\right) + \frac{1}{2} \left(\sigma^2
\left(T-s,Y_s\right)\phi''\left(Y_s\right)\right)\right] ds. 
\end{equation*}
\noindent Fixing $s\in [0,T]$, 
we have
\begin{eqnarray*}
 \mathbb{E}\left[{\tilde b}(s,Y_s;v_s)  \phi'(Y_s) \right] &=&
  \int_\R (\sigma^2(T-s,\cdot)v_{s})'(y) \phi'(y)dy- \int_\R b(T-s,y)
 \phi'(y) v_{s}(y)dy  
\\  
  & =& -\int_\R (\sigma^2)(T-s,y)\phi''(y) v_{s}(y)dy- 
  \int_\R b(T-s,y),\phi'(y) v_{s}(y)dy. 
\end{eqnarray*}
\noindent Hence, we have the identity
\begin{equation*}
\mathbb{E}\left[\phi\left(Y_{T-t}\right)\right] = \int_{\R}\phi\left(y\right){\bf u_T}\left(dy\right) - 
\int^{T-t}_{0}\int_{\R}L_{T-s}\phi\left(y\right)v_{s}\left(y\right)dyds. 
\end{equation*}
\noindent Applying  the change of variable $t \mapsto T-t$, we finally obtain the identity
\begin{equation*}
\int_{\R}\phi\left(y\right)v_{T-t}\left(y\right)dy = \int_{\R}\phi\left(y\right) {\bf u_T}\left(dy\right) - 
\int^{T}_{t}\int_{\R^d}L_{s}\phi\left(y\right)v_{T-s}\left(y\right)dyds.
\end{equation*}
\noindent This means that $t \mapsto u_t$
 is a solution of \eqref{EDPTermCond} with terminal value ${\bf u_T}$.

\end{proof}

\begin{rem} \label{RUni}
Precise discussions on existence and uniqueness of \eqref{MK}
are provided in \cite{LucasOR}.
In particular we have the following.
\begin{enumerate}
\item There is at most one solution (in law)  $(Y, v)$ of \eqref{MK}
such that $v$ is locally bounded in $[0,T[ \times \R^d$.
\item There is at most one strong solution 
$(Y, v)$ of \eqref{MK} such that $v$ is locally
Lipschitz in $[0,T[ \times \R^d$.
\end{enumerate}
Item 1. is a consequence of Theorem 10.1.3 of \cite{stroock}.
Item 2. is a consequence of usual pathwise uniqueness arguments
for SDEs.

\end{rem}

\section{Probabilistic representation with jumps for non-conservative PDEs}
\label{sec:jumps}

\setcounter{equation}{0}

In this section, we outline the link between non-conservative PDEs and non-linear jump diffusions.  
This kind of representation was emphasized in~\cite{DelMoral00, DelMoral} to design interacting jump particles systems to approximate time-dependent
 Feynman-Kac measures. For simplicity, we present this correspondence in the simple case of the non-conservative linear
PDE \eqref{epdeIntro0}
when the coefficients do not depend on the solution, see \eqref{epdeIntro0}.
 However, the same ideas could be extended to the non-linear case where the coefficients $\sigma, b,\Lambda$ may depend on the PDE solution. 



Let us consider the SDE
\begin{equation}
\label{eq:sde}
\left\{
\begin{array}{l}
dX_t=b(t,X_t)dt+\sigma(t,X_t)dW_t\\
X_0\sim {\bf u_0},
\end{array}
\right .
\end{equation}
where $W$ is a one-dimensional Brownian motion. 
Assume that~\eqref{eq:sde} admits a (weak) solution. 
Let $\Lambda$ be a bounded and negative function defined on $[0,T]\times \R$. 
For any $t\in [0,T]$, we define the measure, $\gamma (t,\cdot)$ such that for any 
real-valued Borel measurable test function $\varphi$
\begin{equation}
\label{eq:gamma}
\int \gamma(t,dx) \varphi(x)= \E\left[\varphi(X_t)\exp\left ( \int_0^t \Lambda(s,X_s)ds\right )\right ]\ .
\end{equation}
We recall that by Section \ref{sec:NonCons} we know that
 $\gamma$ is a solution (in the distributional sense) of the
linear and non-conservative PDE
\begin{equation}
\label{eq:PDE}
\left\{
\begin{array}{l}
\partial_t \gamma =\frac{1}{2}\partial_{xx}^2(\sigma^2(t,x)\gamma)-\partial_x(b(t,x)\gamma)+\Lambda(t,x)\gamma\\ 
\gamma (0,\cdot)={\bf u_0}.
\end{array}
\right .
\end{equation}
\begin{rem}
If uniqueness of distributional solutions of~\eqref{eq:PDE} holds,
 then $\gamma$ defined
by (\ref{eq:sde},\ref{eq:gamma}) 
is the unique solution of \eqref{eq:PDE}.
\end{rem}

Let $\gamma(t,\cdot)$ be a solution of \eqref{eq:gamma}
which for each $t$ is a positive measure. 
We introduce the family of probability measures $(\eta (t,\cdot))_{t\in [0,T]}$, obtained by normalizing  $\gamma (t,\cdot)$, such that for any real
 valued bounded and measurable test function $\varphi$ we have
\begin{equation}
\label{eq:eta}
\int \eta(t,dx)\varphi(x):=\frac{\int \gamma(t,dx)\varphi(x)}{\int \gamma(t,dx)}\ .
\end{equation}
By simple differentiation of the above ratio and using the fact that $\gamma $ satisfies~\eqref{eq:PDE}, we obtain 
that $\eta$ is a solution in the distributional sense of the integro-differential PDE 
\begin{equation}
\label{eq:IPDE}
\left\{
\begin{array}{l}
\partial_t \eta=\frac{1}{2}\partial_{xx}(\sigma^2(t,x)\eta)-\partial_x(b(t,x)\eta)+ \Big (\Lambda(t,x)-\int \eta(t,dx)\Lambda (t,x)\Big )\eta \\
\eta_0={\bf u_0}\ .
\end{array}
\right .
\end{equation}
%
Besides one can express  $\gamma (t,\cdot )$ as a function of $(\eta (s,\cdot ))_{s\in [0,t]}$. Indeed,  since $\gamma$ solves the linear PDE~\eqref{eq:PDE} then in particular approaching the constant
 test function $1$, yields 
$$
\partial_t \int \gamma (t,dx)
=\int \gamma(t,dx)\Lambda (t,x)
=\int \gamma(t,dx)\int \eta(t,dx)\Lambda (t,x)\ ,
$$
which gives  $\int \gamma (t,dx)=\exp\left (\int_0^t \int \eta (s,dx)\Lambda (s,x) ds\right )$. Then by definition~\eqref{eq:eta} of $\eta $, 
\begin{equation}
\label{eq:gammaEta}
\gamma (t,\cdot)=\Big (\int \gamma (t,dx)\Big )\eta (t,\cdot)=\exp\left (\int_0^t \int \eta (s,dx)\Lambda (s,x)ds\right )\eta (t,\cdot)\ .
\end{equation}
We already know that for any solution $\gamma$ of~\eqref{eq:PDE} one can build a solution $\eta$ to~\eqref{eq:IPDE} according to relation~\eqref{eq:eta}.
 Conversely,  for any solution $\eta$ of~\eqref{eq:IPDE}, by similar 
manipulations one can build a solution $\gamma$ of~\eqref{eq:PDE} according to~\eqref{eq:gammaEta}. Hence well-posedness of~\eqref{eq:PDE} is equivalent to well-posedness of~\eqref{eq:IPDE}. 

We propose now an alternative probabilistic representation
to \eqref{eq:sde} and \eqref{eq:gamma} of \eqref{eq:PDE}.
Let us introduce the non-linear jump diffusion $Y$ (if it exists),
which evolves between
 two jumps according to the diffusion dynamics~\eqref{eq:sde} and jumps at exponential times with intensity $-\Lambda_t(Y_t)\geq 0$ to a new point independent of the current position and  distributed according to the current law, $\mathcal{L}(Y_t)$. More specifically, we consider a process $Y$ solution of the  following non-linear (in the sense of McKean) SDE with jumps 
\begin{equation}
\label{eq:Jumpsde}
\left\{
\begin{array}{l}
dY_t=b(t,Y_{t^-})dt+\sigma(t,Y_{t^-})dW_t+\int_\R x\mathbf{1}_{\vert x\vert >1} J_t(\mu_{t^-},Y_{t^-},dx)dt+\int_\R x\mathbf{1}_{\vert x\vert \leq 1}  (J_t-\bar J_t)(\mu_{t^-},Y_{t^-},dx)dt\\
Y_0\sim {\bf u_0}\\
\mu_{t^-}=\mathcal{L}(Y_{t^-})\ ,
\end{array}
 \right .
\end{equation}
where $J$ denotes the jump measure and $\bar J$ is the associated predictable compensator such that 
for any probability measure $\nu$ on $\R$
$$
\bar J_t(\nu,y,d(y'-y))=-\Lambda_t(y)\nu(dy')\ ,\quad\textrm{for any}\quad y\,,\,y'\in\R\ .
$$
Note that well-posedness analysis of the above equation constitutes a difficult task. In particular, \cite{JourMeleaWoy}
analyzes well-posedeness and particle approximations of 
some types of non-linear jump diffusions. However, contrarily to~\eqref{eq:Jumpsde}, the nonlinearity considered in~\cite{JourMeleaWoy} is concentrated on the diffusion matrix (assumed to be Lipschitz in the time-marginals of the process w.r.t. Wasserstein metric) and does not involve the jump measure which is assumed to be given. 

Assume that MSKE~\eqref{eq:Jumpsde} admits a weak solution. 
By application of It\^o formula, we observe that the marginals of $Y$ are distributional solutions of~\eqref{eq:IPDE}. 
Indeed, for any real valued test function in $\mathcal{C}^\infty_0(\R)$ 
\begin{eqnarray}
\label{eq:exp}
\E[\varphi (Y_t)]&=&\E[\varphi (Y_0)]\nonumber \\
&&+
\int_0^t\E \left [b(s,Y_{s^-})\varphi'(Y_{s^-})+\frac{1}{2}\sigma^2 (s,Y_{s^-})\varphi''(Y_{s^-})\right ]ds\nonumber \\
&&
+
\int_0^t \E \left [\int \varphi(Y_{s^-}+x)\bar J_s(\mu_{s^-},Y_{s^-},dx)\right ]ds\nonumber \\
&&-\int_0^t\E \left [ \varphi(Y_{s^-})\bar J_s(\mu_{s^-},Y_{s^-},\R)\right ]ds\ .
\end{eqnarray}
\begin{conclusion}
Suppose that \eqref{eq:PDE} admits a unique distributional solution $\gamma$;
let $\eta$ defined by \eqref{eq:eta}. Suppose the existence of a (weak) solution $X$ 
(resp. $Y$) of~(\ref{eq:sde}) 
(resp.~\eqref{eq:Jumpsde}).
\begin{enumerate}
\item $\eta$ is the unique solution (in the sense of distributions) of 
\eqref{eq:IPDE}.
Moreover $\int_\R \varphi(x) \eta(t,dx) =\E[\varphi(Y_t)], t \ge 0$. 
\item
 We obtain the following  identities for $\gamma$ and $\eta$:
\begin{eqnarray}
\label{eq:yx}
\int \gamma(t,dx)\varphi(x)
&=&\E\big[\varphi(X_t)\exp\left ( \int_0^t \Lambda (s,X_s)ds\right )\big ]\nonumber \\
&=&\exp\left (\int_0^t\int \eta (s,dx)\Lambda (s,x)ds\right )\eta_t(\varphi)\nonumber \\
&=&
\exp\left (\int_0^t\E[\Lambda (s,Y_s)]ds\right )\E[\varphi(Y_t)].
\end{eqnarray}
\end{enumerate}
\end{conclusion}
Using the above identities allows to design  discrete time interacting particles systems with geometric interacting jump processes. In particular, in~\cite{delmoJacob2013} the authors provide non asymptotic bias and variance theorems w.r.t. the
time step and the size of the system, allowing to numerically approximate the
 time-dependent family of Feynman-Kac measures $\gamma$.

\section{ McKean SDEs in random environment}
\label{random}

\setcounter{equation}{0}

\subsection{The (S)PDE and the basic idea}
\medskip

Let $(\Omega, \shf, (\shf_t), \P)$ be a filtered probability space.
We consider a progressively measurable random field
$(\xi(t,x))$. We want to discuss probabilistic
representations of 
 \begin{equation}
\label{E7.1stoch}
\left \{
\begin{array}{lll}
\partial_t u &=& \frac{1}{2} \Delta (\beta(u)) + \partial_t \xi(t,x)
 u(t,x)\,,\quad \textrm{with}\quad \beta(u) =  \sigma^2(u) u. 
\\
u(0,\cdot)& =& {\bf u_0}.
\end{array}
\right.
\end{equation}
Suppose for a moment that $\xi$ has random realizations
which are smooth in time so that
\begin{equation} \label{ESmoothmu}
 \partial_t \xi(t,x) = \Lambda(t,x;\omega).
\end{equation}
Under some regularity assumptions on $\Lambda$,
\eqref{E7.1stoch} can be observed  as a randomization of
a particular case of the  PDE \eqref{epdeIntro0}.
For each random realization $\omega \in \Omega$, the natural
(double) probabilistic representation is
\begin{equation}
\label{eq:MckeanSPDERep}
\left\{
\begin{array}{l}
Y_t=Y_0+\int_0^t \sigma\Big (u(s,Y_s)\Big )dW_s\\ 
Y_0\,\sim\, {\bf u_0}\\ 
{\displaystyle \int \varphi(x)u(t,x)dx=
\E^\omega\left [\,\varphi(Y_t)\,\,\exp\Big \{\int_0^t\Lambda\big (s,Y_s;\omega)\big )ds\Big \}\,\right]}\ ,\quad \textrm{for}\ t\in [0,T],
\end{array}
\right . 
\end{equation}
where $\E^\omega$ denotes the expectation with frozen $\omega$.
However the assumption \eqref{ESmoothmu} is not realistic and we are
interested in $\partial_t \xi$ being a white noise in time.
Let $N \in \N^*$.
 Let $B^1, \ldots, B^N$ be $N$ independent $(\shf_t)$-Brownian 
motions, $e^1, \ldots, e^N$ be functions in $C^2_b(\R)$.
 In particular they are
 $H^{-1}$-multiplier, i.e. the maps
$\varphi \rightarrow \varphi e^i$ are continuous 
in  $H^{-1}$. 

We define the random field
$ \xi(t,x) = \sum_{i=0}^N e^i(x) B^i_t,$
where $B^0_t \equiv t$
and we consider the  SPDE \eqref{E7.1stoch} in the sense of distributions, i.e. 
\begin{equation}\label{E7.1stochbis}
\int_\R \varphi(x) u(t,x) dx =
\int_\R \varphi(x) {\bf u_0}(dx) +
\halb \int_0^t \int_\R \varphi''(x) \sigma^2(u(s,x)) ds dx 
+ \int_0^t \int_\R \varphi(x) u(s,x) \xi(ds,x) dx,
\end{equation}
where the latter stochastic integral is intended in the It\^o sense.

\subsection{Well-posedness of the SPDE}


The theorem below contains results taken from 
 \cite{barbuPorousSPDE, Roeckner2016}.

\begin{thm}  \label{th3.1}
 Suppose that
$\beta$ is  Lipschitz.
\begin{itemize}
\item Suppose that $u_0 \in L^2(\R)$.
There is a
 solution  to equation \eqref{E7.1stoch}.
\item  Assume further that $\beta$ is non-degenerate, i.e.
$\beta(r) \ge a r^2, \ r \in \R,$
where
$a>0$.
 Then, there is a  solution $u$ to \eqref{E7.1stoch}
for any probability  ${\bf u_0}(dx)$ (even in $H^{-1}(\R)$).
\item There is at most one solution in the class of random fields
$u$ such that 
$\int_{[0,T] \times \R} u^2(t,x) dtdx < \infty$ a.s.
\end{itemize}
\end{thm}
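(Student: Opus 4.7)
\noindent\emph{Proof plan.} The strategy is to reduce the SPDE \eqref{E7.1stoch} to a pathwise random nonlinear parabolic PDE via a Doss--Sussmann exponential change of unknown, and then apply classical deterministic techniques $\omega$-by-$\omega$, calling on the variational SPDE framework only where needed to justify global energy estimates. For fixed $x\in\R$ set
\[
\mu_t(x):=\exp\!\left(\sum_{i=0}^{N} e^i(x)B^i_t - \tfrac{1}{2}\sum_{i=1}^{N} (e^i(x))^2\, t\right),
\]
which is a.s.\ smooth, strictly positive, and satisfies, in the It\^o sense, $d\mu_t = e^0(x)\mu_t\, dt + \mu_t\sum_{i=1}^{N} e^i(x)\,dB^i_t$. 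Writing $u = y\mu$ and applying It\^o's product rule, the multiplicative noise $u\sum_i e^i\,dB^i$ and the drift $e^0u\,dt$ coming from $\xi$ in \eqref{E7.1stochbis} match exactly the contributions of $y\,d\mu$, so they cancel against each other, and $y$ solves $\omega$-pathwise the random PDE
\[
\partial_t y \;=\; \frac{1}{2\mu_t(x)}\,\Delta\bigl(\beta(y\mu_t(x))\bigr),\qquad y(0,\cdot) = {\bf u_0}.
\]
Since $\mu$ is (a.s.) bounded above and bounded away from $0$ on each compact time-space set, this is a standard nonlinear parabolic equation with random smooth coefficients.

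For existence I would regularize $\beta$ by $\beta_\delta(r):=\beta(r)+\delta r$, which remains Lipschitz while becoming strictly monotone, and solve the corresponding random PDE by Galerkin approximation. Testing the equation against $y_\delta$ yields, using only the Lipschitz hypothesis on $\beta$ and the pathwise boundedness of $\mu$, a uniform $L^\infty_t L^2_x$ bound when $u_0\in L^2(\R)$; a Minty-type monotonicity passage $\delta\to 0$ then produces item~1. For item~2, the non-degeneracy $\beta(r)\ge ar^2$ provides the coercivity $\int \beta(u)u\,dx\ge a\|u\|_{L^2}^2$, so that testing against $(-\Delta)^{-1}u$ yields an a priori control of $\|u(t)\|_{H^{-1}}$ in terms of $\|u_0\|_{H^{-1}}$ together with an $L^2_{t,x}$-bound on $u$. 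In dimension one the Sobolev embedding $H^1(\R)\hookrightarrow C_b(\R)$ makes every Borel probability measure on $\R$ sit in $H^{-1}(\R)$, which is what accommodates arbitrary probability initial data.

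For uniqueness, take two solutions $u_1,u_2$ in the stated $L^2_{t,x}$ class and form $w = u_1 - u_2$. Applying It\^o's formula to $t\mapsto \|w(t)\|_{H^{-1}}^2$, the two multiplicative noises pair symmetrically, their martingale parts cancel, and one is left with a drift controlled by the $H^{-1}$-multiplier norms of the $e^i$, while the nonlinear diffusion contributes $-\langle \beta(u_1)-\beta(u_2),\,w\rangle_{L^2}\le 0$ by monotonicity of $\beta$. A Gronwall argument in expectation then forces $w\equiv 0$ almost surely. The hard part is the rigorous justification of this It\^o formula for $\|\cdot\|_{H^{-1}}^2$ when the $u_i$ are only assumed to lie in $L^2_{t,x}$ and not in $L^2_t H^1_x$: this is where the variational SPDE machinery is unavoidable, and it crucially relies on multiplication by each $e^i$ being bounded on $H^{-1}(\R)$, itself guaranteed by $e^i\in C^2_b(\R)$.
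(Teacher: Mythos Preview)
The paper does not give its own proof of this theorem: it simply records the result as ``taken from \cite{barbuPorousSPDE, Roeckner2016}''. Those references work directly in the variational SPDE framework (a Gelfand triple of the type $L^2(\R)\subset H^{-1}(\R)\subset (L^2(\R))^*$, monotone operator theory in the sense of Krylov--Rozovskii / Pardoux), obtaining existence via Galerkin approximations and It\^o's formula for $\Vert u(t)\Vert_{H^{-1}}^2$, and uniqueness by the same $H^{-1}$-energy computation you outline.

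Your route is genuinely different on the existence side: you remove the noise by the Doss--Sussmann change of unknown $u=y\mu$ and treat the resulting random deterministic PDE $\partial_t y=\tfrac{1}{2\mu}\,\Delta\beta(y\mu)$ pathwise. This is a legitimate alternative (indeed Barbu--R\"ockner themselves have used rescaling transformations of this kind in related papers), and it has the advantage of sidestepping stochastic calculus for the existence part. Your $L^\infty_tL^2_x$ estimate for item~1 does go through after the computation is actually carried out: testing the transformed equation against $y$ and integrating by parts, the cross terms cancel and one is left with
\[
\tfrac{d}{dt}\tfrac12\|y\|_{L^2}^2=-\tfrac12\int\beta'(y\mu)\Bigl[(\partial_x y)^2-\tfrac{y^2(\partial_x\mu)^2}{\mu^2}\Bigr]dx,
\]
so Gronwall closes provided $\beta$ is \emph{monotone}, an assumption you invoke later for uniqueness but which is not stated in the theorem and which you should flag explicitly. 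For uniqueness you and the references coincide.

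One concrete slip: in item~2 you claim $\beta(r)\ge ar^2$ yields $\int\beta(u)u\,dx\ge a\|u\|_{L^2}^2$. As written this gives only $a\int u^3\,dx$, which is not the $L^2$ coercivity you need; the standard hypothesis that makes the $H^{-1}$ energy estimate close is $r\beta(r)\ge a r^2$ (equivalently $\sigma^2\ge a$), and your argument works once the condition is read that way.
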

\begin{rem} 
\begin{itemize}
\item  Previous result extends to the case of an infinite number
of modes $e^i$ and for $d \ge 1$.
\item We remark that the $\partial_t \xi(t,x)$ is a coloured
noise (in space). The case of space-time white noise seems very difficult 
to treat.

\end{itemize}
\end{rem}


\subsection{McKean equation in random environment}

Given a local martingale $M$,
$\she(M)$ denotes the Dol\'eans exponential of $M$ i.e.
$ \exp(M_t - \frac{1}{2}[M]_t), t \ge 0$.
We say that a filtered probability space $(\Omega_0, \shg, (\shg_t), Q)$ 
is a {\bf suitable enlarged space} of  $(\Omega, \shf, (\shf_t), P)$,
if the following holds.
\begin{enumerate}
\item There is a measurable space $(\Omega_1, \shh)$
with $\Omega_0 = \Omega \times \Omega_1$, $\shg = \shf \otimes \shh$
and a random kernel  $(\omega, H) \mapsto  \Q^\omega(H)$
defined on $\Omega \times \shh \rightarrow [0,1]$
such that the probability $\Q$ on $(\Omega_0,
\shg)$ is defined by 
$d\Q(\omega, \omega_1) = d\P(\omega) \Q^\omega(\omega_1)$.
\item The processes $B^1, \ldots, B^N$ are $(\shg_t)$- Brownian motions
where $\shg_t = \shf_t \vee \shh$.
\end{enumerate}


\begin{defi} \label{DWeakExistence}
We say that the {\it non-linear doubly-stochastic diffusion} 
\begin{equation}
\label{DPIY}
\left \{
\begin{array}{lll}
Y_t &=& Y_0 + \int_0^t \Phi(u(s,Y_s)) dW_s, \\
\int  \varphi(x) u(t,x) dx &=& 
 \E^{\Q^\omega} \left( \varphi (Y_t (\omega, \cdot)) 
 \she_t \left(\int_0 \xi (d s, Y_s) (\omega, \cdot)\right) \right),
\\
\xi-{\rm  Law } (Y_0) &=& {\bf u_0}(dx), 
\end{array}
\right.
\end{equation}
admits {\bf weak existence} on $(\Omega,\shf,(\shf_t),\P)$ if there is a
suitably enlarged probability space $(\Omega_0, \shg, (\shg_t), \Q)$
an $(\shg_t)$-Brownian motion $W $ such that \eqref{DPIY} is verified.
The couple $(Y,u)$ will be called {\bf weak solution} of \eqref{DPIY}.
\end{defi}
\begin{rem} \label{RDoubleIto}
\begin{itemize}
\item We remark that the second line in \eqref{DPIY}
represents a sort of {\bf $\xi$-marginal weighted law}
of $Y_t$.
\item Let $(Y,u)$ be a solution to \eqref{DPIY}.
Then $u$ is a solution to \eqref{E7.1stoch}.
\item Such representation allows to show
that  $u(t,x) \ge 0$,  $d\P  dt dx$ a.e. 
and, at least if $e^0 = 0$, 
$ \E\left(\int_\R u(t,x) dx\right) = 1,$ 
so that the conservativity is maintained at the expectation level.
\end{itemize}
\end{rem}

\begin{defi} \label{DWeakUniqueness}
Let two measurable 
 random fields $u^i: \Omega \times [0,T] \times \R \to \R,
 i = 1,2$ on $(\Omega, \shf, \P, (\shf_t))$,
  and $Y^i$, on a suitable extended probability space 
$(\Omega_0^i, \shg^i, (\shg^i_t),  \Q^i), i=1,2$,
such that $(Y^i, u^i)$  are (weak) solutions of \eqref{DPIY}
on $(\Omega, \shf, (\shf_t), \P)$.
If we  always have that $(Y^1, B^1, \ldots, B^N)$ 
and   $(Y^2, B^1, \ldots, B^N)$ have the same law, then 
we say that \eqref{DPIY} admits
 {\bf weak uniqueness}
(on $(\Omega, \shf, (\shf_t),  \P)$).
\end{defi}

\begin{thm} Under the assumption of  Theorem \ref{th3.1}
equation \eqref{DPIY} admits (weak) existence and uniqueness 
on $(\Omega, \shf, (\shf_t), \P)$.
\end{thm}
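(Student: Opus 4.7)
The proof has two parts: existence and uniqueness, and both bootstrap from the SPDE well-posedness provided by Theorem \ref{th3.1}. The overarching strategy is to decouple the non-linearity: use the SPDE solution as a \emph{frozen} coefficient in an ordinary (doubly-stochastic) SDE for $Y$, then recognize the Feynman--Kac weighted law of $Y$ as a solution of a \emph{linear} SPDE that $u$ itself trivially satisfies, so that linear uniqueness forces the linking equation to hold.

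For \textbf{existence}, I would first pick a solution $u$ of the SPDE \eqref{E7.1stoch} supplied by Theorem \ref{th3.1}. Viewing $u(\omega,\cdot,\cdot)$ as an $(\shf_t)$-progressively measurable random coefficient, I would then construct a weak solution $Y$ of $dY_t=\Phi(u(t,Y_t))(\omega)\,dW_s$ on a suitable enlargement $(\Omega_0,\shg,(\shg_t),\Q)$. The non-degeneracy assumption $\beta(r)\ge a r^{2}$ makes $\Phi(u)$ uniformly bounded below, so one can realise $Y$ through Stroock--Varadhan's martingale-problem formulation (or a time-change argument) pathwise in $\omega$, gluing the random kernel $\Q^{\omega}$ via standard measurable selection. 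Having $Y$ in hand, I would define
\[
\tilde u(t,dx)\,\varphi\;:=\;\E^{\Q^{\omega}}\!\Bigl(\varphi(Y_t)\,\she_t\bigl(\textstyle\int_0\xi(ds,Y_s)\bigr)\Bigr),
\]
and apply It\^o's formula to the product $\varphi(Y_t)\she_t(\int_0\xi(ds,Y_s))$. The SDE part contributes $\tfrac12 \sigma^{2}(u(s,Y_s))\varphi''(Y_s)$, and the Dol\'eans exponential contributes the multiplicative noise term $\varphi(Y_s)\,\xi(ds,Y_s)$. One obtains that $\tilde u$ is a weak solution of the \emph{linear} SPDE
\[
\partial_t v=\tfrac12\partial_{xx}\bigl(\sigma^{2}(u)\,v\bigr)+v\,\partial_t\xi,\qquad v(0,\cdot)={\bf u_0},
\]
in which $u$ is frozen. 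But $u$ itself also satisfies this linear SPDE, since freezing $u$ in its own equation changes nothing. Invoking the linear-SPDE uniqueness contained in Theorem \ref{th3.1} (with the fixed coefficient $\sigma^{2}(u)$, still bounded below and Lipschitz-generated) yields $\tilde u=u$, i.e.\ the linking equation.

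For \textbf{uniqueness}, suppose $(Y^{1},u^{1})$ and $(Y^{2},u^{2})$ are two weak solutions. By Remark \ref{RDoubleIto}, both $u^{1}$ and $u^{2}$ are solutions of the non-linear SPDE \eqref{E7.1stoch}, so Theorem \ref{th3.1} gives $u^{1}=u^{2}=:u$. Given this common $u$, each $Y^{i}$ satisfies, on its enlargement, the same SDE with frozen coefficient $\Phi(u(\cdot,\cdot))(\omega)$; conditional on $\shf$, this is a bounded, non-degenerate SDE and its martingale problem is well-posed, so $(Y^{1},B^{1},\dots,B^{N})$ and $(Y^{2},B^{1},\dots,B^{N})$ share the same law as required in Definition \ref{DWeakUniqueness}.

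The \textbf{main obstacle} is the weak construction of $Y$ with the merely measurable random coefficient $\Phi(u)$: the regularity of $u$ guaranteed by Theorem \ref{th3.1} is $L^{2}$-type in $(t,x)$, well below what Lipschitz SDE theory requires, so non-degeneracy plus martingale-problem (or Zvonkin-type) machinery is essential, together with a measurable selection in $\omega$ to produce the kernel $\Q^{\omega}$. A secondary but genuine technical point is giving rigorous meaning to $\int_{0}^{t}\xi(ds,Y_{s})$ as a stochastic integral along the diffusion's trajectory with respect to the spatial noise $\xi$, and justifying the It\^o formula applied to its Dol\'eans exponential; the finite modal expansion $\xi=\sum_{i=0}^{N}e^{i}(x)B^{i}_{t}$ with $e^{i}\in C^{2}_{b}$ keeps this manageable, but it is the step that most needs care.
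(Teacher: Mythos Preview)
The paper does not contain a proof of this theorem. This is a survey article, and the statement is asserted without argument; the surrounding results (Theorem \ref{th3.1}, Remark \ref{RDoubleIto}) are likewise stated with external references rather than proved in the text. There is therefore no paper-side proof to compare your proposal against.

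That said, your strategy is the natural one and is essentially what one finds in the literature underlying this section (the works cited around Theorem \ref{th3.1} and the reference \cite{BRR3} mentioned earlier in Section \ref{sec:NonCons}): freeze the SPDE solution $u$, build $Y$ as a weak solution of the resulting ordinary SDE in random environment, identify the $\xi$-weighted law of $Y$ with $u$ via a linear-uniqueness argument, and for uniqueness push both candidate $u^i$'s through Remark \ref{RDoubleIto} back to the SPDE. Two points deserve more care than you give them. First, when you invoke ``linear-SPDE uniqueness contained in Theorem \ref{th3.1}'' you are appealing to a statement the paper does not actually make: Theorem \ref{th3.1} is formulated for the non-linear equation with $\beta$ Lipschitz, not for a linear equation with a frozen, merely measurable diffusion coefficient $\sigma^2(u(\omega,t,x))$; you need a separate linear well-posedness result here. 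Second, the uniqueness clause of Theorem \ref{th3.1} is conditional on $\int_{[0,T]\times\R}u^2\,dt\,dx<\infty$ a.s., so in the uniqueness half you must check that any weak solution $(Y,u)$ of \eqref{DPIY} automatically produces a $u$ in this class before you can conclude $u^1=u^2$. Your identification of the two genuine technical obstacles (measurable-in-$\omega$ weak construction of $Y$, and the meaning of $\int_0^t\xi(ds,Y_s)$ along the path) is accurate.
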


\section{McKean representation of stochastic control problems}
\label{sec:control}
\subsection{Stochastic control problems and non-linear Partial Differential Equations}

\setcounter{equation}{0}

Let us briefly recall the link between stochastic control and non-linear
 PDEs given by the Hamilton-Jacobi-Bellman (HJB) equation. 
We refer for instance to~\cite{touzibook, Pham09, gozzibook} for more details. 
Consider a \textit{state process} $(X_{s}^{t_0,x,\alpha})_{t_0\leq s \leq T}$ on $[t_0,T]\times \R^d$ 
solution to the controlled SDE
\begin{equation} 
\label{eq:DynX}
\left\{
\begin{array}{lll}
dX^{t_0,x,\alpha}_s&=&b\big (s,X^{t_0,x,\alpha}_s,\alpha(s,X^{t_0,x,\alpha}_s)\big )ds+\sigma \big (s,X^{t_0,x,\alpha}_s,\alpha(s,X^{t_0,x,\alpha}_s)\big )dW_s \\ 
X_{t_0}^{t_0,x,\alpha}&=&x\ ,
\end{array}
\right .
\end{equation}
where $W$ denotes the Brownian motion on $[t_0,T]\times \R^d$, and $\alpha(s,X^{t_0,x,\alpha}_s)$ represents \textit{Markovian} control in the sense that the control at time $t$ is supposed here to depend on $t$ and on the current value of the state process:  
\begin{equation}
\label{eq:feedback}
\alpha\in \mathcal{A}_{t_0,T}:=\left\{\alpha\,: (t,x)\in [t_0,T]\times \R^d\, \mapsto\,  \alpha(t,x)\in A \subset \R^k\right \}\ ,
\end{equation}
$A$ being a subset of $\R^k$. 
For a given initial time and state $(t_0,x)\in [0,T]\times\mathbb{R}^{d}$,
 we are interested in maximizing, over the Markovian controls $\alpha \in \mathcal{A}_{t_0,T}$, the criteria 
\begin{equation}
\label{eq:probJ}
J(t_0,x,\alpha):=
\, 
\mathbb{E} \left[ g(X_{T}^{t_0,x,\alpha})+ \int _{t_0}^{T} f\big (s,X_{s}^{t_0,x,\alpha},\alpha(s,X_s^{t_0,x,\alpha})\big )ds   \right]
\ .
\end{equation}
In the above criteria, the function $f$ is called the \textit{running gain} whereas $g$ is called the \textit{terminal gain}. 
\begin{rem}
At first glance, the set of control processes of the form $\alpha_t=\alpha(t,X_t)$ defined in~\eqref{eq:feedback} may appear too restrictive compared to a larger set of non-anticipative controls $(\alpha_t)$ which may depend on all the past history of the state process $(X_t)$. However, in the framework of Markov control problems (for which the state process $(X_t^{t_0,x,a})$ is Markov, as soon as the control is fixed to a deterministic value $\alpha_t=a\in A$, for all $t\in[t_0,T]$), it is well-known that the optimal control process $(\alpha_t)$ lies in the set of Markovian controls verifying $\alpha_t=\alpha(t,X_t)$. Hence, considering controls of the particular form~\eqref{eq:feedback} is done here without loss of generality. 
\end{rem}
To tackle this finite horizon stochastic control problem, the usual approach consists in introducing the associated value (or \textit{Bellman}) function  $v$ : $[t_0,T] \times \mathbb{R}^{d}\rightarrow \R$ representing the maximum gain one can expect, starting from time $t$ at state $x$, i.e.
\begin{equation}
\label{eq:prob}
v(t,x):= \displaystyle {\sup_{\alpha \in \mathcal{A}_{t,T}} }
\, 
J(t,x,\alpha)
\ ,\quad\textrm{for}\ t\in [t_0,T]\ .
\end{equation}
Note that the terminal condition is known, which fixes $v(T,x)=g(x)$, whereas the initial condition $v(t_0,x)$ corresponds to the solution of the original minimization problem.
The value function is then proved to verify the  \textit{Dynamic Programming Principle (DPP)}   which consists in the  backward induction
\begin{equation}
\label{eq:DPP}
v(t,x)=\sup_{\alpha\in \mathcal{A}_{t,\tau}} \E\big [\,\int_t^{\tau}f(s,X^{t,x,\alpha}_s,\alpha(s,X^{t,x,\alpha}_s))ds+v(\tau,X^{t,x,\alpha}_{\tau})\,\big ]\ ,\quad \textrm{for any stopping time}\ \tau \in ]t,T]\ .
\end{equation}
Under continuity assumptions on $b$, $\sigma$, $f$, $g$, using DPP together with It\^o formula
 allows to characterize  $v$ as a viscosity solution of the  HJB equation 
\begin{equation}
\label{eq:HJB}
\left\{
\begin{array}{lll}
v(T,x)=g(x)\\ 
\partial_t v (t,x)+H(t,x,\nabla v(t,x),\nabla^2 v(t,x)) =0
\ ,
\end{array}
\right .
\end{equation}
where $\nabla$ and $\nabla^{2}$ denote the gradient and the Hessian operators and the so-called, \textit{Hamiltonian},  $H$ denotes the real valued function defined on $[0,T]\times \R^d\times \R^d\times \mathcal{S}^d$   ($\mathcal{S}^d$ denoting the set of symmetric matrices in $\mathbb{R}^{d\times d}$), such that 
\begin{equation}
\label{eq:H1}
H(t,x,\delta,\gamma):=\sup_{ a\in A} \left \{f(t,x, a)+b(t,x, a)^\top \delta(t,x)+\frac{1}{2} Tr[\sigma \sigma ' (t,x, a) \gamma (t,x)]\right \}\ .
\end{equation}
Note that~\eqref{eq:HJB} is a non-linear PDE because of the nonlinearity in the Hamiltonian induced by the supremum operator. 
Besides, assuming that, for all $(t,x)\in [t_0,T]\times \R^d$, the supremum in~\eqref{eq:H1} is attained at a unique maximizer, then the optimal control $\alpha^*$ is directly obtained as a function of the Bellman funtion and its derivatives, i.e.
\begin{equation}
\label{eq:H2}
\alpha^*(t,x)=\textrm{arg}\max_{ a\in A} \left \{f(t,x, a)+b(t,x, a)^\top \nabla v(t,x)+\frac{1}{2} Tr[\sigma \sigma^\top (t,x, a) \nabla ^2v (t,x)]\right \}\ .
\end{equation}
Except in some very concrete cases such as the Linear Quadratic Gaussian (LQG) setting (where the states dynamics involve an affine drift with Gaussian noise and the cost is quadratic both w.r.t. the control and the state), there is no explicit solution to stochastic control problems. 
To numerically approximate the solution of equation~\eqref{eq:HJB}, several approaches have been proposed, mainly differing in the way the value function $v$
 is interpreted.  Indeed, as pointed out,  $v$ can be viewed either as  the solution to the control problem~(\ref{eq:prob}),
or as a (viscosity) solution of the non-linear PDE~(\ref{eq:HJB}).
\begin{enumerate}
\item
When $v$ is defined as the solution to the control problem~(\ref{eq:prob}), a natural approach consists in discretizing 
	the time continuous control problem and apply the time discrete Dynamic Programming Principle~\cite{BertShre}. Then the problem consists in maximizing over the controls, backwardly in time, the conditional expectation of the value function related to \eqref{eq:DPP}. The maximization at time step $t_k$ can be done via a parametrization of the control $x\mapsto \alpha^{\theta}_{t_k}(x)$ 
via a parameter $\theta$ so 
that parametric
 optimization methods such as the stochastic gradient algorithm could be applied to maximize the expectation over $\theta$. It remains to approximate the conditional expectations  by numerical methods such as PDE, Fourier, Monte Carlo, Quantization or lattice methods\ldots  A great variety of numerical approximation schemes have been developed in the specific Bermudan option valuation test-bed~\cite{glasserman, longstaff, BallyEtal05, TV, Delmoral2011, bouchard-warin}.
Alternatively, one can use Markov chain approximation method~\cite{Kushner1992} which consists in a  time-space discretization 
designed to obtain a proper Markov chain.
\item 
In the second approach we recall that $v$ is viewed
as the solution of \eqref{eq:HJB}.
The problem amounts then to discretize a non-linear PDE. Then one can rely on  numerical analysis methods (e.g. finite differences, or finite elements) and use monotone approximation schemes in the sense of Barles and Souganidis~\cite{Barles1991} to build converging approximation schemes, e.g.
~\cite{zidani,Forsyth2012}.  This type of approach is in general limited to state space dimension lower than $4$.  To tackle higher dimensional problems, one approach consists in converting the PDE into a probabilistic setting in order to apply Monte Carlo types algorithms. To that end, various kinds of probabilistic representations of non-linear PDEs are available. 
Forward Backward Stochastic Differential Equations (FBSDE) were introduced 
in~\cite{PardouxEtPeng92} as probabilistic representations of semi-linear PDEs.
Then various types of numerical schemes for FBSDE have been developed. They mainly 
differ in the approach of evaluating conditional expectations: 
\cite{BouchardTouzi} (resp. \cite{GobetWarin},
\cite{DelarueMenozzi, pages_quant}) use kernel (resp. regression, quantization) methods. 
Recently, important progresses have been done performing machine learning techniques, see e.g. 
\cite{jentzen, hur2019machine}.
Branching processes~\cite{Mckean75, HOTTW} can also provide probabilistic representations of semi-linear PDEs via Feynman-Kac formula. 
Non-linear SDEs in the sense of McKean~\cite{Mckean} are another approach that constitutes the subject of the present paper.
\item
Other approaches take advantage of both interpretations see for instance~\cite{touzi} 
and in~\cite{Tan2012}.
\end{enumerate}

\subsection{McKean type representation in a toy control problem example}

In order to illustrate the application of MFKEs to control problems,
we consider a simple example corresponding to an inventory problem, for which the Hamiltonian 
maximization~\eqref{eq:H1},\eqref{eq:H2} is explicit. The state $(X_t)_{t\in[t_0,T]}$ denotes the stock level evolving randomly with a control of the drift $\alpha$:
$$
\left\{
\begin{array}{l}
dX_t^{t_0,x,\alpha}
=
-
\alpha(t,X_t^{t_0,x,\alpha})dt
+\sigma dW_t
\\
J(t_0,x,\alpha)=\displaystyle{\sup_{\alpha\in \mathcal{A}_{t_0,T}}} \, 
\E\Big [
g(X_T^{t_0,x,\alpha})
-
\int_{t_0}^T\Big [
\big (\alpha(t,X_t^{t_0,x,\alpha})-D_t\big )^2
+
h(X_t^{t_0,x,\alpha})
\Big ]
\,dt\Big ].
\end{array}
\right .
$$
Bound constraints on the storage level are implicitly forced by the penalization $h$. A target terminal level is indicated by the terminal gain  
$g$, supposed here  to be Lebesgue integrable. The objective is then to follow a deterministic target profile $(D_t)_{t\in [0,T]}$, on a given finite horizon $[t_0,T]$. When the admissible set in which the controls take their values $A=\R$, one can explicitly derive the optimal control as a function of the value function derivative 
$$
\alpha^\ast(t,\cdot)=D_t+\frac{1}{2}(\partial_xv)(t,\cdot)\ ,
$$
which yields the following HJB equation
$$
\partial_tv+\frac{1}{4} (\partial_xv)^2+D_t\partial_xv+\frac{\sigma^2}{2}\partial_{xx}v-h=0\ .
$$
Reversing the time, (with $t_0=0$) gives $(t,x)\mapsto u(t,x):=v(T-t,x)$ solution of 
\begin{equation}
\label{eq:KPZ}
\left \{  
\begin{array}{lll}
 \partial_t u &=&  \frac{1}{4}( \partial_x u )^2+
\frac{\sigma^2}{2} \partial_{xx}u+D_t\partial_xu-h, \\
u(0,x) &= &g(x).
\end{array}
\right .
\end{equation}
We recover the framework of~\eqref{epdeIntro0}, with $\Lambda(t,x,y,z)=\frac{1}{4}\frac{\vert z\vert^2}{y}-\frac{h(x)}{y}$ and $b(t,x,y)=-D_t$. 
Consequently the Bellman function  $v$ can be represented via
\begin{equation}
\label{eq:MckeanExtIntroControl}
\left\{
\begin{array}{l}
Y_t=Y_0+  \sigma W_t 
- \int_0^t D_s ds\\ 
Y_0\,\sim\, \frac{g(x) dx}{\int_\R g(y)dy}
\\ 
\int \varphi(x)v(t,x)dx= \left(\int_\R g(y) dy\right) \quad 
\E\left [\,\varphi(Y_{T-t})\,\,\exp\Big \{\int_t^{T}\Lambda\big (s,Y_{T-s},{v}(s,Y_{T-s}),\nabla{v}(s,Y_{T-s})\big )ds\Big \}\,\right], \\
\textrm{for}\ t\in [0,T].
\end{array}
\right . 
\end{equation}

\section*{Acknowledgments}

The work was supported by a public grant as part of the
{\it Investissement d'avenir project, reference ANR-11-LABX-0056-LMH,
  LabEx LMH,}
in a joint call with Gaspard Monge Program for optimization, operations research and their interactions with data sciences.

\bibliographystyle{plain}
\bibliography{../../../BIBLIO_FILE/ThesisLucas.bib}

\end{document}